\documentclass[11pt]{article}

\usepackage[margin=1in]{geometry}
\usepackage{amsmath,amssymb,amsthm,mathtools}
\usepackage{microtype}
\usepackage{setspace}
\usepackage[colorlinks=true,linkcolor=blue,citecolor=blue,urlcolor=blue]{hyperref}

\allowdisplaybreaks

\theoremstyle{plain}
\newtheorem{theorem}{Theorem}[section]
\newtheorem{lemma}[theorem]{Lemma}
\newtheorem{proposition}[theorem]{Proposition}

\theoremstyle{remark}
\newtheorem{remark}[theorem]{Remark}

\DeclareMathOperator{\sgn}{sgn}
\DeclareMathOperator{\rk}{rk}

\title{Real-rootedness of Kazhdan--Lusztig and $Z$-polynomials of thagomizer matroids and graphic matroids of $K_{2,n}$}
\author{Philip B. Zhang\\[2pt]
\small College of Mathematical Sciences, Tianjin Normal University\\
\small Tianjin 300387, P. R. China\\
\small \texttt{zhang@tjnu.edu.cn}}
\date{}
\hypersetup{
  pdfauthor={Philip B. Zhang},
  pdftitle={Real-rootedness of Kazhdan--Lusztig and Z-polynomials of thagomizer matroids and graphic matroids of K2,n},
  pdfkeywords={Kazhdan--Lusztig polynomial, Z-polynomial, real-rootedness, thagomizer matroid}
}

\begin{document}
\setstretch{1.08}
\maketitle

\begin{abstract}
Let $T_n=K_{1,1,n}$, and let $P_n(x)$ denote the Kazhdan--Lusztig polynomial of its graphic matroid. We prove that, whenever $n\ge2$ and $0\le\lambda\le n/2$, the polynomial $P_n(x)+\lambda x$ has exactly $\lfloor n/2\rfloor$ zeros, all of which are negative and simple.  In particular, the Kazhdan--Lusztig polynomials of the graphic matroids of $T_n$ and $K_{2,n}$ are real-rooted. We also prove that, for $n\ge2$, the common polynomial $Z_{T_n}(x)=Z_{K_{2,n}}(x)$ has $n+1$ distinct negative zeros. The proofs use a common rational transformation, reducing the Kazhdan--Lusztig case to alternating sign evaluations at the zeros of a Chebyshev polynomial and the $Z$-polynomial case to a unit-circle criterion for self-inversive polynomials.
\end{abstract}

\noindent\textbf{2020 Mathematics Subject Classification.} 05B35, 05A15, 05A20, 26C10, 30C15.\par
\smallskip
\noindent\textbf{Keywords.} Kazhdan--Lusztig polynomial, $Z$-polynomial,
real-rootedness, thagomizer matroid.

\section{Introduction}

The objective of this paper is to study the real-rootedness of the
Kazhdan--Lusztig and $Z$-polynomials of the thagomizer matroids and of
the graphic matroids of $K_{2,n}$.  Our main result is in fact a
one-parameter statement: the two Kazhdan--Lusztig polynomials occur in
a polynomial pencil that is real-rooted throughout the full parameter
interval $0\le\lambda\le n/2$, with all zeros simple and negative.

The Kazhdan--Lusztig polynomial $P_M(x)$ of a matroid $M$ was
introduced by Elias, Proudfoot, and Wakefield~\cite{EliasProudfootWakefield}.
Its coefficients are nonnegative by the singular Hodge theory of
Braden, Huh, Matherne, Proudfoot, and Wang~\cite{BradenHuhMatherneProudfootWang};
see also the ICM survey of Braden and Proudfoot~\cite{BradenProudfootICM}.
Real-rootedness is a considerably stronger property, since it implies
log-concavity by Newton's inequalities.  On the other hand, it cannot
be expected for arbitrary matroids: Cheng and Liu~\cite{ChengLiuNonunimodal}
constructed representable matroids over every finite field whose
Kazhdan--Lusztig polynomials are not even unimodal.  This raises the
natural question of which matroids have real-rooted Kazhdan--Lusztig
polynomials.

Real-rootedness has been established for several classes.  If
$U_{m,d}$ denotes the uniform matroid of rank $d$ on $m+d$ elements,
Gao, Lu, Xie, Yang, and Zhang~\cite{GaoLuXieYangZhangUniform} proved
that its Kazhdan--Lusztig and $Z$-polynomials are real-rooted for
$2\le m\le15$ and $d\ge1$.  For thagomizer matroids, Wu and
Zhang~\cite{WuZhangThagomizerLogConcavity} established the
log-concavity of the coefficients.  We prove that their
Kazhdan--Lusztig polynomials are real-rooted and, more precisely, that
all their zeros are simple and negative.

For $n\ge0$, let $T_n=K_{1,1,n}$.  The graphic matroid of $T_n$ is
called the $n$th thagomizer matroid.  The graph $T_n$ is obtained from
$K_{2,n}$ by adding the edge joining the two vertices in the part of
size $2$.  We use the same notation for a graph and its graphic
matroid, and write
\[
P_n(x)=P_{T_n}(x),\qquad P_0(x)=1.
\]
Let $C(z)=\sum_{n\ge0}C_nz^n=(1-\sqrt{1-4z})/(2z)$ be the Catalan
generating function.  Gedeon~\cite[Theorem~1(2)]{Gedeon} obtained
\begin{equation}\label{eq:ThagGF}
   \sum_{n\ge0}P_n(x)z^n=C\bigl(z-(1-x)z^2\bigr).
\end{equation}
Moreover, Gedeon, Proudfoot, and Young~\cite[Theorem~5.8]{GPY}
proved that $P_{K_{2,n}}(x)=P_n(x)+x$ for $n\ge2$.  These two
formulas lead us to consider the pencil $P_n(x)+\lambda x$.

Our first theorem gives a uniform range for the parameter $\lambda$.

\begin{theorem}\label{thm:KLpencil}
For $n\ge2$ and $0\le\lambda\le n/2$, the polynomial
$P_n(x)+\lambda x$ has exactly $\lfloor n/2\rfloor$ zeros, all of
which are negative and simple.
\end{theorem}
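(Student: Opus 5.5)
The plan is to reduce to an intermediate-value argument in a trigonometric variable. Because the rank of $T_n$ is $n+1$, we have $\deg P_n<(n+1)/2$, so $\deg\bigl(P_n(x)+\lambda x\bigr)\le\lfloor n/2\rfloor$. It therefore suffices to exhibit $\lfloor n/2\rfloor$ sign changes of $P_n(x)+\lambda x$ on $(-\infty,0)$. That gives exactly $\lfloor n/2\rfloor$ distinct negative zeros, which then exhausts the degree, so all zeros are simple and negative. One also needs the degree to equal $\lfloor n/2\rfloor$, but this comes for free once that many zeros are found and the polynomial is nonzero. It is nonzero since its value at $0$ is $P_n(0)=1$.

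First I would make \eqref{eq:ThagGF} explicit. With $w=z-(1-x)z^2$ we have $C(w)=(1-\sqrt{1-4w})/(2w)$ and $1-4w=1-4z+4(1-x)z^2=(1-az)(1-bz)$, where $a+b=4$ and $ab=4(1-x)$. For $x<0$ the roots are complex conjugate. This suggests the rational substitution $x=-\tan^2\theta$ with $\theta\in(0,\pi/2)$, so that $1-x=\sec^2\theta$ and $a,b=2\sec\theta\,e^{\pm i\theta}$. In this variable $1-4w=1-4z\cos\theta\sec\theta+4z^2\sec^2\theta$, i.e.\ $1-2(\cos\theta)u+u^2$ with $u=2z\sec\theta$. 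That is exactly the denominator of the Chebyshev generating function. Consequently $(1-4w)^{\pm1/2}$, and then $P_n$ after dividing by $w$, can be written as finite trigonometric sums in $\theta$ times powers of $\sec\theta$. The target is a closed form such as
\[
\cos^{n}\theta\,\bigl(P_n(-\tan^2\theta)+\lambda(-\tan^2\theta)\bigr)=F_n(\theta)-\lambda\sin^2\theta\cos^{n-2}\theta .
\]
Here $F_n(\theta)$ is a short combination of $\cos((n+1)\theta)$, $\cos(n\theta)$, $\sin((n+1)\theta)/\sin\theta$, and similar terms. The exact form is to be obtained by extracting coefficients, equivalently by solving the three-term recurrence for $P_n$ coming from the quadratic relation $wC^2-C+1=0$.

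Next I would evaluate at the points $\theta_j$ where the dominant oscillating factor is $\pm1$ or vanishes in a controlled way. Natural candidates are the zeros of a Chebyshev polynomial, $\theta_j=j\pi/(n+1)$ or $(2j-1)\pi/(2n+2)$ for $j=0,1,\dots,\lfloor n/2\rfloor$ inside $[0,\pi/2)$. At these points $F_n(\theta_j)$ should reduce to $(-1)^j$ times an explicit positive quantity. The main obstacle is to show that the perturbation term $\lambda\sin^2\theta_j\cos^{n-2}\theta_j$, together with any non-dominant part of $F_n$, never overturns that sign. This must hold uniformly for $0\le\lambda\le n/2$, so the sign at $\theta_j$ is $(-1)^j$ for every $j$.

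For that obstacle I would first try to show that, at $\theta_j$, the main term has absolute value at least something like $\sin^2\theta_j\cdot\frac{n}{2}\cos^{n-2}\theta_j$. This should come from an elementary inequality such as $|\sin((n+1)\theta)|\le(n+1)\sin\theta$ or $\cos^{k}\theta\le\cos(k\theta)$-type bounds near $0$. Because the bound is linear in $\lambda$, it is enough to check the endpoints $\lambda=0$ and $\lambda=n/2$. The endpoint $\theta_0=0$ corresponds to $x=0$, where the value is $1>0$. The last point, near $\theta=\pi/2$, corresponds to $x\to-\infty$, where the sign is governed by the leading coefficient, and this must be handled separately for $n$ even and $n$ odd. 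If these inequalities hold, the sequence of values at $x_j=-\tan^2\theta_j$ alternates in sign through $\lfloor n/2\rfloor+1$ points. The intermediate value theorem then produces $\lfloor n/2\rfloor$ zeros in $(-\infty,0)$, and the degree count above finishes the proof.
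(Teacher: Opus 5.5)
Your overall architecture is the same as the paper's: substitute $x=-\tan^2\theta$, show that $\cos^n\theta\,(P_n(x)+\lambda x)$ alternates in sign at $\lfloor n/2\rfloor+1$ points, then compare with the degree. The reduction to $\lambda\in\{0,n/2\}$ by linearity is also fine.

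The gap is the step that carries all the weight. You need a lower bound on the main term at the nodes strong enough to absorb $\lambda\sin^2\theta\cos^{n-2}\theta$, and you plan to read it off from a short closed form $F_n(\theta)$. No such form exists. Among powers of $1-2u\cos\theta+u^2$, it is the power $-1$ that generates Chebyshev polynomials. The power $\frac12$ that actually occurs in $C(w)$ generates Gegenbauer polynomials $C^{(-1/2)}_k(\cos\theta)$.

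Concretely, $2^n\cos^n\theta\,P_n(-\tan^2\theta)=\sum_{r=0}^{n}\beta_{n,r}\cos((2r-n)\theta)$. Here $B_k(q)=\sum_r\beta_{k,r}q^r$ satisfies $\sum_k B_k(q)z^k=C((1+q)z-4qz^2)$, and the $\beta_{n,r}$ are genuine Catalan convolutions. For $n=5$ one gets $84\cos5\theta-28\cos3\theta-24\cos\theta$, and the number of terms grows with $n$. So bounds like $|\sin((n+1)\theta)|\le(n+1)\sin\theta$ have nothing to act on. Your nodes $j\pi/(n+1)$ or $(2j-1)\pi/(2n+2)$ are tied to a formula that is not there.

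A smaller point: if, for even $n$, the last sign change is taken at $x\to-\infty$, you need the degree to be exactly $\lfloor n/2\rfloor$ with positive leading coefficient \emph{before} counting zeros. This follows from $P_n(x)=\sum_m C_{n-m}\binom{n-m}{m}(x-1)^m$, but it does not come ``for free''.

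The missing idea is a sign property of these coefficients, used at the right nodes. The paper takes $\theta_j=j\pi/n$. Then $q=e^{2i\theta_j}$ is an $n$th root of unity, $e^{-in\theta_j}=(-1)^j$, and the extreme coefficients $\beta_{n,0}=\beta_{n,n}=C_n$ add coherently.

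It then proves $\beta_{n,r}\le0$ for $1\le r\le n-1$. This comes from the decomposition $C(X+Y-4XY)=C(X)+C(Y)-1-UV(U+V)/(1+UV)$, with $U=C(X)-1$ and $V=C(Y)-1$. The mixed coefficients of $UV/(1+UV)$ are positive, being half super-Catalan numbers by von Szily's identity.

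Since $B_n(1)=2^n$, this gives
$$B_n(q)-2^n=2\sum_{r=1}^{n-1}(-\beta_{n,r})\sin^2(\pi jr/n)\ge0,$$
that is, $(-1)^j\cos^n\theta_j\,P_n(-\tan^2\theta_j)\ge1$. The perturbation is then beaten by $\sin^2\theta\cos^{n-2}\theta<2/n$, which is exactly where the threshold $\lambda\le n/2$ comes from.

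Without nonpositivity of the interior coefficients, or some substitute lower bound of this strength at the nodes, your argument does not establish the alternation of signs.
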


The interior nodes used in its proof are the zeros of the Chebyshev
polynomial $U_{n-1}$ of the second kind.  After a degree-$n$
normalization of the pencil, alternating evaluations at these nodes
give strict interlacing with $U_{n-1}$ and hence the exact zero count.

Taking $\lambda=0$ and $\lambda=1$, we obtain the two graphic
families that motivated the pencil.

\begin{theorem}\label{thm:KLtwoFamilies}
For every $n\ge2$, each of the polynomials $P_n(x)$ and
$P_{K_{2,n}}(x)$ has exactly $\lfloor n/2\rfloor$ zeros, all of
which are negative and simple.
\end{theorem}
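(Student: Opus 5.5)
This is a direct specialization of Theorem~\ref{thm:KLpencil}, so the plan is simply to choose two admissible values of the parameter $\lambda$ and check that they lie in the allowed range.

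For $P_n(x)$, take $\lambda=0$. This value lies in $[0,n/2]$ for every $n\ge2$, and $P_n(x)+0\cdot x=P_n(x)$. Theorem~\ref{thm:KLpencil} then says directly that $P_n(x)$ has exactly $\lfloor n/2\rfloor$ zeros, all negative and simple.

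For $P_{K_{2,n}}(x)$, use the identity of Gedeon, Proudfoot, and Young~\cite[Theorem~5.8]{GPY}:
\[
P_{K_{2,n}}(x)=P_n(x)+x \qquad (n\ge2).
\]
So $P_{K_{2,n}}(x)$ is the member of the pencil with $\lambda=1$. The only hypothesis to check is $1\le n/2$, which holds precisely when $n\ge2$. This is the stated range, so Theorem~\ref{thm:KLpencil} applies and gives the same conclusion for $P_{K_{2,n}}(x)$.

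No step here is a real obstacle; all the analytic work is inside Theorem~\ref{thm:KLpencil}. The one point to handle carefully is the boundary case $n=2$, where $\lambda=1$ equals the endpoint $n/2$ exactly. The theorem's closed interval $0\le\lambda\le n/2$ covers this case.

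As a sanity check, for $n=2$ the claim is a single negative zero, i.e.\ both polynomials are linear with positive coefficients. From \eqref{eq:ThagGF}, the coefficient of $z^2$ in $C\bigl(z-(1-x)z^2\bigr)$ is $C_2+C_1\cdot(-(1-x))=2-1+x=1+x$. So $P_2(x)=1+x$, with zero $-1$, and $P_{K_{2,2}}(x)=1+2x$, with zero $-1/2$. This agrees with the specialization and confirms the endpoint case.
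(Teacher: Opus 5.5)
Your proposal is correct and matches the paper's proof: specialize Theorem~\ref{thm:KLpencil} at $\lambda=0$, and at $\lambda=1$, which is admissible because $n\ge2$. The Gedeon--Proudfoot--Young identity $P_{K_{2,n}}(x)=P_n(x)+x$ then gives the second family, and your $n=2$ check ($P_2(x)=1+x$, $P_{K_{2,2}}(x)=1+2x$) is also correct.
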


We next turn to the $Z$-polynomial introduced by Proudfoot, Xu, and
Young~\cite{ProudfootXuYoung}.  Ferroni, Nasr, and
Vecchi~\cite{FerroniNasrVecchiGamma} derived a formula for
$Z_{T_n}(x)$ in the proof of their Proposition~5.18 and proved in
Proposition~5.20 that $Z_{T_n}(x)=Z_{K_{2,n}}(x)$ for $n\ge2$.
Their Proposition~5.3 relates the real-rootedness of a palindromic
polynomial with positive coefficients to the negative real-rootedness
of its $\gamma$-polynomial.  In the present case, we prove that the
corresponding $\gamma$-polynomial has only simple negative zeros and
then transfer this simplicity to the $Z$-polynomial.

\begin{theorem}\label{thm:ZRealRooted}
For every $n\ge2$, the polynomial
$Z_{T_n}(x)=Z_{K_{2,n}}(x)$ has $n+1$ distinct negative zeros.
\end{theorem}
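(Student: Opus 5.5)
Throughout, $Z_n(x)$ denotes $Z_{T_n}(x)$. I will prove the statement for $Z_n$ directly. It transfers to $K_{2,n}$ because Ferroni--Nasr--Vecchi show $Z_{T_n}=Z_{K_{2,n}}$ for $n\ge2$.

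\textbf{Reduction to the $\gamma$-polynomial.} Since $T_n$ has $n+2$ vertices and is connected, its graphic matroid has rank $n+1$. Hence $Z_n$ is palindromic of degree $n+1$ with positive coefficients and constant term $1$. Write
\[
Z_n(x)=\sum_{i=0}^{\lfloor (n+1)/2\rfloor}\gamma_i\,x^i(1+x)^{n+1-2i},
\qquad
\Gamma_n(y)=\sum_i\gamma_i y^i .
\]
Then
\[
Z_n(x)=(1+x)^{n+1}\,\Gamma_n\!\bigl(x/(1+x)^2\bigr).
\]
By Proposition~5.3 of Ferroni--Nasr--Vecchi, real-rootedness of $Z_n$ is tied to negative real-rootedness of $\Gamma_n$. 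I will prove the sharper claim that $\Gamma_n$ has exactly $m:=\lfloor (n+1)/2\rfloor$ zeros, all negative and simple. In particular this forces $\deg\Gamma_n=m$, i.e.\ $\gamma_m\neq0$.

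\textbf{Transferring simplicity to $Z_n$.} Suppose $y_0<0$ is a zero of $\Gamma_n$. Then $x/(1+x)^2=y_0$ is equivalent to $x+1/x=1/y_0-2<-2$. This equation has two distinct negative roots $x_0\neq x_0^{-1}$, and neither equals $-1$. Distinct $y_0$ give disjoint pairs, because $x\mapsto x/(1+x)^2$ is a function of $x$. So the $m$ zeros of $\Gamma_n$ produce $2m$ distinct negative zeros of $Z_n$, none equal to $-1$.

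If $n+1$ is odd, then $n+1-2i\ge1$ for all $i$, so the factor $(1+x)$ divides $Z_n$. This contributes the additional simple zero $-1$, which is distinct from the others. In both parities the total count is $2m+[\,n+1 \text{ odd}\,]=n+1=\deg Z_n$. Therefore these are all the zeros, and they are distinct and negative.

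\textbf{Main step: the zeros of $\Gamma_n$.}
\begin{itemize}
\item I start from the explicit formula for $Z_n$ derived in the proof of Proposition~5.18 of Ferroni--Nasr--Vecchi. Equivalently, I can start from Gedeon's generating function \eqref{eq:ThagGF} via $Z_M=\sum_F \mu$-weighted sums of $P$'s over flats. From this I extract $\Gamma_n$, ideally as a closed form or generating function in $y$.
\item Next I apply the same rational transformation used for the Kazhdan--Lusztig pencil, which turns the Catalan square root into a rational expression. The goal is to express $\Gamma_n$ at suitable negative points $y_k$ through trigonometric quantities. These points should correspond to the zeros of a Chebyshev polynomial, e.g.\ $y_k=-1/(4\cos^2(k\pi/(n+2)))$ or similar.
\item At these nodes I show that the signs of $\Gamma_n(y_k)$ alternate, and I use $\Gamma_n(0)=\gamma_0=1>0$. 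The intermediate value theorem then yields $m$ sign changes on $(-\infty,0)$, hence $m$ distinct negative zeros. Since $\deg\Gamma_n\le m$, these are all the zeros, each simple.
\item If the nodes give only $m-1$ sign changes, I close the count by comparing the sign of the leading coefficient $\gamma_m$ with the sign at the last node, letting $y\to-\infty$.
\end{itemize}

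An alternative route to the same count is the unit-circle criterion for self-inversive polynomials mentioned in the abstract. Here I would substitute $y=-1/(2-2\cos\theta)$-type parametrizations, so that the negative $y$-axis corresponds to $|w|=1$. Then I would show that the associated self-inversive polynomial has all $m$ zeros distinct on the unit circle, e.g.\ via a Cohn/Lakatos-type dominant-coefficient test or an argument-principle count.

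The main obstacle is the sign evaluation. Concretely, it consists of deriving a clean trigonometric closed form of $\Gamma_n$ at the nodes, and proving that the dominant term really controls the sign for every $n$, including small $n$, which I would check by hand.
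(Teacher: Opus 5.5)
Your reduction to $\Gamma_n$ and your final transfer step are correct and coincide with the last step of the paper: each simple zero $y_0<0$ of $\Gamma_n$ yields the two distinct negative roots of $x+1/x=1/y_0-2$, and there is the extra simple zero $-1$ when $n$ is even. The gap is the main step. You never prove that $\Gamma_n$ has $\lfloor(n+1)/2\rfloor$ simple negative zeros; you list strategies and leave the sign evaluation open, and that is the entire content of the theorem.

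Moreover, the concrete choices you suggest would fail. Your nodes $-1/(4\cos^2(k\pi/(n+2)))$ and the parametrization $y=-1/(2-2\cos\theta)$ (i.e.\ $y=w/(w-1)^2$ with $|w|=1$) only reach $(-\infty,-1/4]$. Already $\Gamma_3(u)=1+7u+u^2$ has the zero $(-7+\sqrt{45})/2\approx-0.146$ outside that range. At your three nodes for $n=3$ the values of $\Gamma_3$ are all negative, so there is no alternation.

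The first missing ingredient is the substitution $u=((q-1)/(q+1))^2$. It sends $q=e^{2i\theta}$ to $-\tan^2\theta$, so it maps the unit circle onto all of $(-\infty,0]$. The second is a coefficient-sign estimate for $D_n=\Phi_{n+1}(\Gamma_n)$. The paper derives $\sum_n\Gamma_n(u)z^n=(1-\sqrt{1-4z+4(1-u)z^2})/(2z(1-z))$, hence $\sum_nD_n(q)z^n=(1-\sqrt{(1-4z)(1-4qz)})/(2z(1-(1+q)z))$. Lemma~\ref{lem:DnCoeffSign} then shows that both extreme coefficients of $D_n$ equal $\rho_n=\sum_{j\le n}C_j$ and all interior coefficients are $\le0$. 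Its proof uses a recursion in $r$ for the coefficient of $q^r$ and the bound $\rho_k\le2C_k$.

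Since $D_n(1)=2^{n+1}\Gamma_n(0)=2^{n+1}$, this gives $\rho_n>\frac12\sum_r\delta_{n,r}$, which is exactly the strict Lakatos--Losonczi hypothesis. The simple unit-circle zeros of $D_n$ then pull back to simple negative zeros of $\Gamma_n$. Simplicity at $q=-1$ also forces $\deg\Gamma_n=\lfloor(n+1)/2\rfloor$, which your fallback via the sign of $\gamma_m$ takes for granted.

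Once this lemma is available, your alternating-sign idea also works, but at the nodes $-\tan^2(j\pi/(n+1))$. At the $(n+1)$th roots of unity, palindromicity makes $D_n(q)$ real and the coefficient signs give $D_n(q)\ge D_n(1)=2^{n+1}$. Hence $\sgn\Gamma_n(-\tan^2(j\pi/(n+1)))=(-1)^j$. For odd $n$, the last sign change comes from $D_n(-1)=2^{n+1}\gamma_m\ge2^{n+1}$. Without such a positivity input, neither of your two routes has an argument behind it.
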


The polynomial $P_n(x)$ also admits a permutation interpretation.
Vella's descent enumeration for $321$-avoiding
permutations~\cite{Vella}, together with~\eqref{eq:ThagGF}, gives
\[
P_n(x)=\sum_{\pi\in\mathfrak S_n(321)}
       x^{\operatorname{des}(\pi)}.
\]
Here $\mathfrak S_n(321)$ denotes the set of $321$-avoiding
permutations in $\mathfrak S_n$, and $\operatorname{des}(\pi)$ is the
number of descents of $\pi$.  When $n=0$, the sum contains the empty
permutation, whose descent number is $0$.

The proofs of Theorems~\ref{thm:KLpencil} and~
\ref{thm:ZRealRooted} are based on the same rational substitution
\[
w=\left(\frac{q-1}{q+1}\right)^2.
\]
For the pencil $P_n(x)+\lambda x$, we first transform $P_n(x)$
into a palindromic polynomial whose interior coefficients are
nonpositive.  The contribution of $\lambda x$ can then be evaluated
explicitly at roots of unity, yielding alternating signs on the
negative real axis.  After normalization, these signs lift to the
Chebyshev nodes, where strict interlacing supplies the zero-counting
step in the proof of Theorem~\ref{thm:KLpencil}.  The generating
function also reveals a direct Catalan--Chebyshev relation.
For the $Z$-polynomial, the same substitution produces a
self-inversive polynomial, and a theorem of Lakatos and Losonczi can
be applied.  Section~\ref{sec:KLproof} is devoted to the polynomial
pencil, while Section~\ref{sec:Zpolynomials} treats the
$Z$-polynomials.

\section{The Kazhdan--Lusztig polynomial pencil}\label{sec:KLproof}

We prove Theorem~\ref{thm:KLpencil} by transforming $P_n$ into a
palindromic polynomial and evaluating it at roots of unity.  The
transformed polynomials have a simple generating function, and their
coefficient signs reduce to one mixed-coefficient estimate for the
Catalan series.  The resulting signs are then lifted to the Chebyshev
nodes, where strict interlacing gives the exact zero count; finally,
the zeros are transferred back to the negative real axis.

\subsection{The transformed polynomials}

For an integer $N\ge0$, a polynomial
$F(q)=\sum_{r=0}^{N}f_rq^r$ is said to be \emph{palindromic with
respect to $N$} if $f_r=f_{N-r}$ for all $r$, or equivalently,
$F(q)=q^NF(q^{-1})$.  If $f_0\ne0$, then $N=\deg F$, and we simply
say that $F$ is palindromic.

\begin{lemma}\label{lem:PhiBasic}
Let $N\ge1$, and let
$A(x)=\sum_{m=0}^{\lfloor N/2\rfloor}\alpha_mx^m$.  Define
\begin{equation}\label{eq:PhiDef}
\Phi_N(A)(q)=\sum_{m=0}^{\lfloor N/2\rfloor}
\alpha_m(q-1)^{2m}(1+q)^{N-2m}.
\end{equation}
Then $\Phi_N(A)$ is palindromic with respect to $N$.  Moreover, for
$q=e^{2i\theta}$ with $0\le\theta<\pi/2$,
\begin{equation}\label{eq:PhiUnitCircle}
\Phi_N(A)(e^{2i\theta})
=2^Ne^{iN\theta}\cos^N\theta\,A(-\tan^2\theta).
\end{equation}
\end{lemma}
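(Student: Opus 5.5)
By linearity in $A$, both assertions reduce to the single basis term
\[
g_m(q)=(q-1)^{2m}(1+q)^{N-2m},\qquad 0\le m\le\lfloor N/2\rfloor .
\]
The hypothesis $m\le\lfloor N/2\rfloor$ gives $N-2m\ge0$, so each $g_m$ is a genuine polynomial of degree at most $N$.

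\emph{Palindromicity.} It suffices to check $q^Ng_m(q^{-1})=g_m(q)$ for each $m$. Split the factor $q^N$ as $q^{2m}\cdot q^{N-2m}$. Then
\[
q^{2m}(q^{-1}-1)^{2m}=(1-q)^{2m}=(q-1)^{2m},
\]
where the last equality holds because the exponent is even. Similarly,
\[
q^{N-2m}(1+q^{-1})^{N-2m}=(q+1)^{N-2m}.
\]
Multiplying these gives $q^Ng_m(q^{-1})=g_m(q)$. Summing over $m$ with coefficients $\alpha_m$ yields $\Phi_N(A)(q)=q^N\Phi_N(A)(q^{-1})$, which is palindromicity with respect to $N$ in the sense defined above. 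Here $N$ is only a formal degree bound, so no claim about $f_0\neq0$ is needed.

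\emph{Unit-circle formula.} Put $q=e^{2i\theta}$ and pull out half-angle factors:
\[
1+q=e^{i\theta}(e^{-i\theta}+e^{i\theta})=2e^{i\theta}\cos\theta,\qquad
q-1=e^{i\theta}(e^{i\theta}-e^{-i\theta})=2ie^{i\theta}\sin\theta .
\]
Hence
\[
(q-1)^{2m}=(2ie^{i\theta}\sin\theta)^{2m}=(-1)^m2^{2m}e^{2im\theta}\sin^{2m}\theta,\qquad
(1+q)^{N-2m}=2^{N-2m}e^{i(N-2m)\theta}\cos^{N-2m}\theta .
\]
The product of these two is
\[
g_m(e^{2i\theta})=2^Ne^{iN\theta}(-1)^m\sin^{2m}\theta\cos^{N-2m}\theta .
\]
For $0\le\theta<\pi/2$ we have $\cos\theta>0$, so we may factor out $\cos^N\theta$:
\[
g_m(e^{2i\theta})=2^Ne^{iN\theta}\cos^N\theta\,(-\tan^2\theta)^m .
\]
Summing with the coefficients $\alpha_m$ gives \eqref{eq:PhiUnitCircle}.

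The argument is entirely routine. The only points needing care are that the evenness of the exponent $2m$ absorbs the sign in $(1-q)^{2m}$, and that the restriction $\theta<\pi/2$ makes $\cos\theta\neq0$, which is what legitimizes writing $\sin^{2m}\theta\cos^{-2m}\theta=\tan^{2m}\theta$.
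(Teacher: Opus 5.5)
Your proposal is correct and is essentially the paper's argument: palindromicity is checked summand by summand, and the unit-circle formula comes from the half-angle factorizations of $1+q$ and $q-1$ at $q=e^{2i\theta}$. The only cosmetic difference is that the paper first passes through the rational form $(1+q)^N A\bigl(((q-1)/(q+1))^2\bigr)$ and uses $(q-1)/(q+1)=i\tan\theta$, whereas you expand each basis term $g_m$ directly.
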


\begin{proof}
Each summand in \eqref{eq:PhiDef} is transformed into $q^{-N}$ times
itself under $q\mapsto q^{-1}$, so $\Phi_N(A)$ is palindromic with
respect to $N$.  For $q\ne-1$,
\begin{equation}\label{eq:PhiRational}
\Phi_N(A)(q)=(1+q)^N
A\!\left(\left(\frac{q-1}{q+1}\right)^2\right).
\end{equation}
If $q=e^{2i\theta}$, then $(q-1)/(q+1)=i\tan\theta$ and
$1+q=2e^{i\theta}\cos\theta$, which gives
\eqref{eq:PhiUnitCircle}.
\end{proof}

We now apply this transformation to the thagomizer polynomials.  Since
$T_k$ has rank $k+1$, the defining degree condition for matroid
Kazhdan--Lusztig polynomials gives
$\deg P_k<(k+1)/2$, and hence $\deg P_k\le\lfloor k/2\rfloor$.
Thus $\Phi_k(P_k)$ is well defined for $k\ge1$.  Set
$B_k(q)=\Phi_k(P_k)(q)$ for $k\ge1$ and $B_0(q)=1$.  For $q\ne-1$,
put $v=((q-1)/(q+1))^2$.  By \eqref{eq:PhiRational} and
\eqref{eq:ThagGF},
\begin{align*}
\sum_{k\ge0}B_k(q)z^k
&=\sum_{k\ge0}P_k(v)((1+q)z)^k\\
&=C\!\left((1+q)z-(1-v)(1+q)^2z^2\right)\\
&=C((1+q)z-4qz^2).
\end{align*}
For each $k$, the coefficients of $z^k$ on the two sides are
polynomials in $q$ and agree for every $q\ne-1$.  They therefore agree
identically in $q$, and hence
\begin{equation}\label{eq:Bgenfun}
\sum_{k\ge0}B_k(q)z^k=C((1+q)z-4qz^2).
\end{equation}

Write $B_k(q)=\sum_{r=0}^k\beta_{k,r}q^r$, and put
$\mathcal C(X,Y)=C(X+Y-4XY)$.  Substituting $X=z$ and $Y=qz$ in
\eqref{eq:Bgenfun} gives
\begin{equation}\label{eq:betaMixedCoeff}
\beta_{k,r}=[X^{k-r}Y^r]\mathcal C(X,Y).
\end{equation}
Thus the coefficient signs of $B_k$ are reduced to the mixed
coefficients of $\mathcal C$.  The next lemma supplies exactly the
mixed-coefficient estimate needed later at roots of unity.

\subsection{A mixed-coefficient estimate}

By a mixed coefficient of a bivariate series we mean a coefficient
$[X^aY^b]$ with $a,b\ge1$.  Let $U=C(X)-1$ and $V=C(Y)-1$.

\begin{lemma}\label{lem:twoVariableCatalan}
One has
\begin{equation}\label{eq:CXYdecomp}
C(X+Y-4XY)
=C(X)+C(Y)-1-\frac{UV(U+V)}{1+UV}.
\end{equation}
Consequently, $[X^aY^b]C(X+Y-4XY)\le0$ whenever $a,b\ge1$.
\end{lemma}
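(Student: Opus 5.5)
The plan is to use the parametrization of the Catalan function by its own value. Writing $C(t)=1+W$, the equation $tC^2-C+1=0$ gives $t=W/(1+W)^2$. With $U=C(X)-1$ and $V=C(Y)-1$ we therefore have
\[
X=\frac{U}{(1+U)^2},\qquad Y=\frac{V}{(1+V)^2}.
\]
The first step is to compute $X+Y-4XY$ in terms of $U$ and $V$ and to guess the corresponding $W$ with $X+Y-4XY=W/(1+W)^2$. The identity \eqref{eq:CXYdecomp} says this $W$ should be
\[
W=U+V-\frac{UV(U+V)}{1+UV}=\frac{U+V}{1+UV}.
\]
This is the hyperbolic-tangent-type addition law. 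It is consistent with the substitution $x\mapsto\bigl(\tfrac{1-\sqrt{1-4x}}{1+\sqrt{1-4x}}\bigr)$, under which $1-4(X+Y-4XY)=(1-4X)(1-4Y)$ becomes multiplicative.

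Concretely, the verification goes as follows. Put $s=\sqrt{1-4X}$, so that $U=(1-s)/(1+s)$, and similarly $\sqrt{1-4Y}=t$ gives $V=(1-t)/(1+t)$. Then $1-4(X+Y-4XY)=s^2t^2$, so
\[
C(X+Y-4XY)-1=\frac{1-st}{1+st}.
\]
Substituting $s=(1-U)/(1+U)$ and $t=(1-V)/(1+V)$ gives $(1-st)/(1+st)=(U+V)/(1+UV)$ by a direct algebraic check. This is an identity of formal power series, since every step involves series with zero constant term in $X$ and $Y$, or the branches agreeing at $0$. Rearranging $(U+V)/(1+UV)=U+V-UV(U+V)/(1+UV)$ gives \eqref{eq:CXYdecomp}.

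For the sign statement, note that $C(X)+C(Y)-1$ has no mixed coefficients. So the mixed coefficients of the left side equal those of
\[
-\frac{UV(U+V)}{1+UV}=-(U+V)\sum_{j\ge1}(-1)^{j-1}(UV)^j.
\]
This is an alternating series in $UV$, so positivity is not immediate, and this is the main obstacle. My first attempt is to set $U=\sum_{k\ge1}C_kX^k$, which has nonnegative coefficients. I then want to show that $\frac{U}{1+UV}$, or the whole expression, is dominated coefficientwise. A cleaner route is to rewrite the expression as
\[
\frac{UV(U+V)}{1+UV}=UV\cdot\frac{U+V}{1+UV}=UV\,\bigl(C(X+Y-4XY)-1\bigr),
\]
and it suffices to show this has nonnegative coefficients. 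That is not obviously easier, though, since $C(X+Y-4XY)$ itself has negative mixed coefficients.

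Instead I would use the Catalan equation $U=X(1+U)^2$ to get $U/(1+U)^2=X$. The plan is to express $\frac{U}{1+UV}$ through the variable $\tilde U=U/(1+U)$, which has $\tilde U=XC(X)$ and nonnegative coefficients, and look for a factorization of the form $1+UV=(1+U)(1+V)\bigl(1-\tilde U\tilde V\cdot\text{something}\bigr)$. If a representation of the form $UV(U+V)/(1+UV)=\sum(\text{products of series with nonnegative coefficients})$ can be found, for instance a geometric series in $\tilde U\tilde V$ with positive weights, the coefficientwise claim follows. I would test small coefficients first to confirm positivity, for example $[XY]$, where the coefficient is $-4+2+2=0$, and $[X^2Y]$. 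Then I would search for such an expansion, possibly using $1-4X=\bigl((1-U)/(1+U)\bigr)^2$ to convert to the variables $s,t$.
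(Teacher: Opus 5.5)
\textbf{Verdict.} Your derivation of the identity is correct and is essentially the paper's. Both arguments rest on $\sqrt{1-4X}=(1-U)/(1+U)$ and $1-4(X+Y-4XY)=(1-4X)(1-4Y)$, which give $C(X+Y-4XY)=1+(U+V)/(1+UV)$. The gap is the sign statement, and that is the substantive part of the lemma.

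\textbf{Why your plan does not close it.} You isolate the alternating series correctly, but you only describe a search for a manifestly nonnegative expansion, and the search as you frame it cannot succeed. In the free variables $U,V$ one has $UV/(1+UV)=UV-U^2V^2+\cdots$. In your variables $\tilde U=U/(1+U)$, $\tilde V=V/(1+V)$, the factorization is
\[
1+UV=(1+U)(1+V)\,(1-\tilde U-\tilde V+2\tilde U\tilde V).
\]
This is not of the shape $1-\tilde U\tilde V\cdot(\text{something})$. It gives $UV/(1+UV)=\tilde U\tilde V/(1-\tilde U-\tilde V+2\tilde U\tilde V)$. If $\tilde U,\tilde V$ are treated as independent indeterminates $p,q$, the coefficient of $p^3q^3$ is $-2$. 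So positivity is not a formal consequence of rewriting in $U,V$ or $\tilde U,\tilde V$. It depends on the specific coefficients of the Catalan series, so you need an actual evaluation; checking small coefficients does not supply one.

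\textbf{The missing idea.} Split off the factor $U+V$, which has nonnegative coefficients, and prove that $UV/(1+UV)$ by itself has positive mixed coefficients and no terms on the axes.

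\emph{The paper's evaluation.} Lagrange inversion gives $[X^a]U^\ell=\frac{\ell}{a}\binom{2a}{a-\ell}$. This turns $[X^aY^b]\,UV/(1+UV)$ into $\sum_{\ell\ge1}(-1)^{\ell-1}\frac{\ell^2}{ab}\binom{2a}{a-\ell}\binom{2b}{b-\ell}$. With $W_k=(-1)^k\binom{2a}{a+k}\binom{2b}{b-k}$, von Szily's super-Catalan identity and the telescoping relation $\sum_k k^2W_k=-ab\sum_kW_k$ evaluate this sum as $\frac{(2a)!(2b)!}{2\,a!\,b!\,(a+b)!}>0$.

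\emph{A route inside your own framework.} Your $s,t$ variables would also finish the job. First, $UV/(1+UV)=(1-s)(1-t)/\bigl(2(1+st)\bigr)$, and a direct check gives
\[
\frac{(1-s)(1-t)}{2(1+st)}-\frac{s+t}{2st(1+st)}=\frac12-\frac1{2s}-\frac1{2t},
\]
which has no mixed terms. Second, the beta integral $\frac{(2a)!(2b)!}{a!\,b!\,(a+b)!}=\frac{4^{a+b}}{\pi}\int_0^1u^{a-1/2}(1-u)^{b-1/2}\,du$ yields
\[
\sum_{a,b\ge0}\frac{(2a)!(2b)!}{a!\,b!\,(a+b)!}X^aY^b=\frac{s+t}{st(1+st)}.
\]
Hence the mixed coefficients of $UV/(1+UV)$ are half the super-Catalan numbers, which recovers the paper's value.

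Without some such evaluation, the inequality $[X^aY^b]C(X+Y-4XY)\le0$ remains unproved.
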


\begin{proof}
Since $U=X(1+U)^2$ and $V=Y(1+V)^2$, we have
$\sqrt{1-4X}=(1-U)/(1+U)$ and
$\sqrt{1-4Y}=(1-V)/(1+V)$.  All square roots here denote the unique
formal power series with constant term $1$.  Together with
$1-4(X+Y-4XY)=(1-4X)(1-4Y)$, this gives
\[
C(X+Y-4XY)=\frac{(1+U)(1+V)}{1+UV}
=1+\frac{U+V}{1+UV},
\]
which is equivalent to \eqref{eq:CXYdecomp}.

It remains to show that $UV(U+V)/(1+UV)$ has nonnegative mixed
coefficients.  As usual, we set $\binom{m}{r}=0$ when $r<0$ or $r>m$.
In particular, all sums below are finite.  Since $UV$ has zero
constant term, its
formal geometric expansion is
\[
\frac{UV}{1+UV}=\sum_{\ell\ge1}(-1)^{\ell-1}U^\ell V^\ell.
\]
Since $U=X(1+U)^2$ and $V=Y(1+V)^2$, Lagrange inversion gives,
for $a,\ell\ge1$, the identities
$[X^a]U^\ell=(\ell/a)\binom{2a}{a-\ell}$ and
$[Y^b]V^\ell=(\ell/b)\binom{2b}{b-\ell}$.  Hence, for $a,b\ge1$,
\[
[X^aY^b]\frac{UV}{1+UV}
=\sum_{\ell\ge1}(-1)^{\ell-1}\frac{\ell^2}{ab}
\binom{2a}{a-\ell}\binom{2b}{b-\ell}.
\]
Let
$W_k=(-1)^k\binom{2a}{a+k}\binom{2b}{b-k}$.  The von Szily identity
for the super-Catalan numbers, in the form recorded by
Gessel~\cite[Section~6]{GesselSuperBallot}, states that
\[
\sum_{k\in\mathbb Z}W_k
=\frac{(2a)!(2b)!}{a!b!(a+b)!}.
\]
Set $R_k=-\tfrac12(k+a)(k+b)W_k$ and
$m=\min\{a,b\}$.  Then $W_k$ is supported on $-m\le k\le m$.
For every index in this range, $W_k\ne0$, and
$W_{k+1}/W_k=-(a-k)(b-k)/((a+k+1)(b+k+1))$.
At $k=m$, the quotient identity remains valid after setting
$W_{m+1}=0$, since one of the factors $a-k$ and $b-k$ vanishes.
Consequently,
\[
R_{k+1}-R_k
=\frac12\bigl((a-k)(b-k)+(a+k)(b+k)\bigr)W_k
=(k^2+ab)W_k
\qquad(-m\le k\le m).
\]
Moreover, one factor in $R_{-m}$ vanishes and $W_{m+1}=0$, so
$R_{-m}=R_{m+1}=0$.  The recurrence therefore telescopes over the
full support and gives
$\sum_{k\in\mathbb Z}k^2W_k=-ab\sum_{k\in\mathbb Z}W_k$.
By the symmetry of the binomial
coefficients, $W_{-\ell}=W_\ell$, and therefore
\[
\begin{aligned}
[X^aY^b]\frac{UV}{1+UV}
&=-\frac1{ab}\sum_{\ell\ge1}\ell^2W_\ell
=-\frac1{2ab}\sum_{k\in\mathbb Z}k^2W_k\\
&=\frac12\sum_{k\in\mathbb Z}W_k
=\frac12\frac{(2a)!(2b)!}{a!b!(a+b)!}>0.
\end{aligned}
\]
Thus $UV/(1+UV)$ has positive mixed coefficients and no terms supported
on either coordinate axis.  Since $U+V$ has nonnegative coefficients,
the product $UV(U+V)/(1+UV)$ has nonnegative mixed coefficients.  The
conclusion follows from \eqref{eq:CXYdecomp}.
\end{proof}

The preceding lemma now determines the signs of all interior
coefficients of $B_n$.  Combined with palindromicity, this turns the
unit-circle evaluations of $B_n$ into alternating signs for the
original polynomial on the negative real axis.

\subsection{Proofs of the main results}

\begin{proof}[Proof of Theorem~\ref{thm:KLpencil}]
Fix $n\ge2$ and $0\le\lambda\le n/2$.  Put
$P_{n,\lambda}(x)=P_n(x)+\lambda x$ and
$d=\lfloor n/2\rfloor$.

\medskip
\noindent\emph{Step 1: Coefficient signs and a root-of-unity bound.}
By \eqref{eq:betaMixedCoeff}, the symmetry of $\mathcal C(X,Y)$ gives
$\beta_{n,r}=\beta_{n,n-r}$.  Moreover,
$\mathcal C(X,0)=C(X)$ and $\mathcal C(0,Y)=C(Y)$, while
Lemma~\ref{lem:twoVariableCatalan} gives
$\beta_{n,0}=\beta_{n,n}=C_n$ and
$\beta_{n,r}\le0$ for $1\le r\le n-1$.
Setting $q=1$ in \eqref{eq:Bgenfun} yields
$\sum_{k\ge0}B_k(1)z^k=C(2z-4z^2)=(1-2z)^{-1}$.  Indeed,
$1-4(2z-4z^2)=(1-4z)^2$, whose formal square root with constant
term $1$ is $1-4z$.  Hence
$B_n(1)=2^n$.

For $j\in\mathbb Z$, let $q=e^{2\pi ij/n}$.  Since $B_n$ has real
coefficients and is palindromic with respect to $n$, we have
$B_n(q)=q^nB_n(q^{-1})=B_n(\overline q)=\overline{B_n(q)}$; hence
$B_n(q)$ is real.  Taking real parts in
$B_n(q)-B_n(1)=\sum_{r=1}^{n-1}\beta_{n,r}(q^r-1)$ gives
\begin{align}
B_n(q)-B_n(1)
&=\sum_{r=1}^{n-1}\beta_{n,r}
\left(\cos\frac{2\pi jr}{n}-1\right) \notag\\
&=2\sum_{r=1}^{n-1}(-\beta_{n,r})
\sin^2\frac{\pi jr}{n}\ge0.
\label{eq:BnRootUnityBound}
\end{align}
Thus $B_n(q)\ge2^n$ for every $n$th root of unity $q$.  This
uniform lower bound is the input needed to control the additional
term arising from $\lambda x$.

\medskip
\noindent\emph{Step 2: Alternating signs on the negative real axis.}
Let $0\le j\le d$ when $n$ is odd, and let $0\le j\le d-1$ when
$n$ is even.  Put
$\theta=j\pi/n$ and $q=e^{2i\theta}$.  By linearity of $\Phi_n$ and
\eqref{eq:PhiUnitCircle},
\begin{equation}\label{eq:pencilTransformed}
2^n(-1)^j\cos^n\theta\,
P_{n,\lambda}(-\tan^2\theta)
=B_n(q)-2^n\lambda(-1)^j
\sin^2\theta\cos^{n-2}\theta.
\end{equation}
If $j$ is odd, the second term on the right-hand side of
\eqref{eq:pencilTransformed} is nonnegative, and the whole expression
is positive by \eqref{eq:BnRootUnityBound}.  If $j$ is even, the
right-hand side of \eqref{eq:pencilTransformed} is at least
$2^n(1-\lambda\sin^2\theta\cos^{n-2}\theta)$.  We claim that
\begin{equation}\label{eq:perturbBound}
0\le\sin^2\theta\cos^{n-2}\theta<\frac2n.
\end{equation}
If $n=2$, then $d=1$ and the only relevant index is $j=0$, so the
left-hand side is $0$.  For $n>2$, put
$u=\cos^2\theta$.  The maximum of
$(1-u)u^{(n-2)/2}$ on $0\le u\le1$ is attained at
$u=(n-2)/n$ and equals
$\frac2n((n-2)/n)^{(n-2)/2}<2/n$.  By
\eqref{eq:perturbBound} and $0\le\lambda\le n/2$, the right-hand
side of \eqref{eq:pencilTransformed} is positive.  Therefore
\begin{equation}\label{eq:pencilSign}
\sgn P_{n,\lambda}\!\left(-\tan^2\frac{j\pi}{n}\right)=(-1)^j.
\end{equation}
These evaluation points lie on the nonpositive real axis and are
strictly decreasing as $j$ increases.  We now lift these signs to the
standard cosine nodes associated with a Chebyshev polynomial; this
supplies the zero-counting step.

\medskip
\noindent\emph{Step 3: Degree, Chebyshev interlacing, and zero counting.}
Expanding $C(t)=\sum_{r\ge0}C_rt^r$ in \eqref{eq:ThagGF} and
extracting the coefficient of $z^n$ gives
\begin{equation}\label{eq:PnExpansion}
P_n(x)=\sum_{m=0}^{\lfloor n/2\rfloor}
C_{n-m}\binom{n-m}{m}(x-1)^m.
\end{equation}
By \eqref{eq:PnExpansion}, if $d\ge2$, then
$[x^d]P_{n,\lambda}(x)=C_{n-d}\binom{n-d}{d}>0$.  If $d=1$, then
$[x]P_{n,\lambda}(x)=(n-1)C_{n-1}+\lambda>0$.  Thus
$\deg P_{n,\lambda}=d$ and its leading coefficient is positive.

Write $P_{n,\lambda}(x)=\sum_{m=0}^d a_mx^m$ and define its
degree-$n$ normalization by
\begin{equation}\label{eq:normalizedP}
\widetilde P_{n,\lambda}(y)
=\sum_{m=0}^d a_m y^{n-2m}(y^2-1)^m.
\end{equation}
For $y\ne0$,
\begin{equation}\label{eq:normalizedPRational}
\widetilde P_{n,\lambda}(y)
=y^nP_{n,\lambda}(1-y^{-2}).
\end{equation}
The polynomial $\widetilde P_{n,\lambda}$ contains only powers
congruent to $n$ modulo $2$, and
\[
[y^n]\widetilde P_{n,\lambda}(y)
=P_{n,\lambda}(1)=C_n+\lambda>0,
\]
where $P_n(1)=C_n$ follows from \eqref{eq:ThagGF}.  Thus its degree
is exactly $n$.

Let $U_m$ denote the Chebyshev polynomial of the second kind,
normalized by
\begin{equation}\label{eq:ChebyshevSecondKind}
U_m(\cos\theta)=\frac{\sin((m+1)\theta)}{\sin\theta},
\end{equation}
and put $c_j=\cos(j\pi/n)$ for $0\le j\le n$.  If
$0\le j<n/2$, then $c_j>0$, and
\eqref{eq:normalizedPRational} together with \eqref{eq:pencilSign}
gives
\[
\sgn\widetilde P_{n,\lambda}(c_j)
=\sgn P_{n,\lambda}\!\left(-\tan^2\frac{j\pi}{n}\right)
=(-1)^j.
\]
The parity of $\widetilde P_{n,\lambda}$ gives the same formula at
the negative nodes.  If $n=2d$ and $j=d$, then
\[
\widetilde P_{n,\lambda}(0)=(-1)^d a_d,
\]
whose sign is $(-1)^d$ by the positivity of the leading coefficient
proved above.  Consequently,
\begin{equation}\label{eq:normalizedNodeSign}
\sgn\widetilde P_{n,\lambda}(c_j)=(-1)^j
\qquad(0\le j\le n).
\end{equation}

Since $c_0>c_1>\cdots>c_n$, the intermediate value theorem gives at
least one zero of $\widetilde P_{n,\lambda}$ in each interval
$(c_j,c_{j-1})$, $1\le j\le n$.  Its degree is $n$, so these are
exactly its zeros and all are simple.  If they are denoted by
$\rho_1>\cdots>\rho_n$, then
\begin{equation}\label{eq:ChebyshevInterlacing}
1>\rho_1>c_1>\rho_2>c_2>\cdots>
c_{n-1}>\rho_n>-1.
\end{equation}
The zeros of $U_{n-1}$ are $c_1,\ldots,c_{n-1}$, so
\eqref{eq:ChebyshevInterlacing} says precisely that $U_{n-1}$
strictly interlaces $\widetilde P_{n,\lambda}$.

It remains to translate this back to $P_{n,\lambda}$.  If $n=2d$,
the $n$ zeros of $\widetilde P_{n,\lambda}$ occur in $d$ pairs
$\{r,-r\}$ with $0<r<1$.  If $n=2d+1$, then $0$ is one of its
zeros and the remaining zeros occur in $d$ such pairs.  By
\eqref{eq:normalizedPRational}, each pair corresponds to a single zero
\[
x=1-r^{-2}<0
\]
of $P_{n,\lambda}$.  Distinct positive values of $r$ give distinct
values of $x$.  Moreover, the change of variables
$x=1-y^{-2}$ has nonzero derivative at $y=r$, so simplicity of the
zeros of $\widetilde P_{n,\lambda}$ implies simplicity of these
zeros of $P_{n,\lambda}$.  We have therefore obtained $d$ distinct
simple negative zeros.  Since $\deg P_{n,\lambda}=d$, these are all
its zeros.
\end{proof}

\begin{proof}[Proof of Theorem~\ref{thm:KLtwoFamilies}]
Take $\lambda=0$ in Theorem~\ref{thm:KLpencil}.  For $n\ge2$, we have
$1\le n/2$, so the same theorem also applies with $\lambda=1$.  The
identity
$P_{K_{2,n}}(x)=P_n(x)+x$ of Gedeon, Proudfoot, and
Young~\cite[Theorem~5.8]{GPY} completes the proof.
\end{proof}

We record one further consequence of the normalization used in the
proof of Theorem~\ref{thm:KLpencil}.

\begin{remark}\label{rem:CatalanChebyshev}
For $\lambda=0$ and $n\ge0$, let $\widetilde P_n(y)$ be the polynomial
continuation of $y^nP_n(1-y^{-2})$.  This normalization has a direct
relation to $U_n(y/2)$.  For $n=0,1$, the two polynomials coincide;
for $n\ge2$, $\widetilde P_n(y)$ is not a scalar multiple of
$U_n(y/2)$.  The resulting sequence satisfies
\begin{align}
\sum_{n\ge0}\widetilde P_n(y)z^n
&=C(yz-z^2),
\label{eq:normalizedGF}\\
\widetilde P_n(y)
&=\sum_{m=0}^{\lfloor n/2\rfloor}
(-1)^m C_{n-m}\binom{n-m}{m}y^{n-2m}
\qquad(n\ge0).
\label{eq:CatalanChebyshevCoefficients}
\end{align}
In comparison,
\begin{equation}\label{eq:ChebyshevCoefficients}
U_n(y/2)=\sum_{m=0}^{\lfloor n/2\rfloor}
(-1)^m\binom{n-m}{m}y^{n-2m}.
\end{equation}
Thus $\widetilde P_n$ is obtained from $U_n(y/2)$ by multiplying the
coefficient of $y^{n-2m}$ by $C_{n-m}$.  It also admits the
semicircle-moment representation
\begin{equation}\label{eq:semicircleRepresentation}
\widetilde P_n(y)=\frac1{2\pi}\int_{-2}^{2}
\xi^n U_n(y\xi/2)\sqrt{4-\xi^2}\,d\xi.
\end{equation}

For $y\ne0$, replace $x$ by $1-y^{-2}$ and $z$ by $yz$ in
\eqref{eq:ThagGF}.  This gives \eqref{eq:normalizedGF}; since its
coefficients are polynomials in $y$, the identity extends to $y=0$.
Alternatively, substituting $x=1-y^{-2}$ into
\eqref{eq:PnExpansion} gives \eqref{eq:CatalanChebyshevCoefficients}
directly for $y\ne0$, and hence for every $y$ by polynomial
continuation.  The standard
explicit expansion of the Chebyshev polynomial of the second kind is
\eqref{eq:ChebyshevCoefficients}, which proves the coefficient
comparison.

Finally, the centered semicircle distribution on $[-2,2]$ satisfies
\[
\frac1{2\pi}\int_{-2}^{2}\xi^{2r}\sqrt{4-\xi^2}\,d\xi=C_r,
\qquad
\frac1{2\pi}\int_{-2}^{2}\xi^{2r+1}\sqrt{4-\xi^2}\,d\xi=0.
\]
These identities follow immediately from a beta integral.  Expanding
$U_n(y\xi/2)$ by \eqref{eq:ChebyshevCoefficients} and integrating
term by term now yields \eqref{eq:semicircleRepresentation}.
\end{remark}

We next turn from the Kazhdan--Lusztig polynomials themselves to the
associated $Z$-polynomials.  The same rational substitution will
reappear, but the coefficient estimate will now be used through a
unit-circle criterion for self-inversive polynomials.

\section{The \texorpdfstring{$Z$}{Z}-polynomials}\label{sec:Zpolynomials}

We first express $Z_{T_n}(x)$ in the $\gamma$-basis and derive the
generating function of its $\gamma$-polynomial.  We then apply the
transformation from Section~\ref{sec:KLproof}; the resulting
palindromic polynomial has a coefficient pattern to which the theorem
of Lakatos and Losonczi applies.  Finally, we transfer its zeros back
to the $\gamma$-polynomial and then to $Z_{T_n}(x)$.

\subsection{The \texorpdfstring{$\gamma$}{gamma}-polynomial}

For a matroid $M$, Proudfoot, Xu, and
Young~\cite[Definition~2.1]{ProudfootXuYoung} defined the
$Z$-polynomial by $Z_M(x)=\sum_F x^{\rk F}P_{M/F}(x)$, where $F$
ranges over the lattice $\mathcal L(M)$ of flats of $M$.  For thagomizer
matroids, the following formulas reduce the problem to the already
studied polynomials $P_r(x)$.  The first occurs in the proof of
Proposition~5.18 of Ferroni, Nasr, and
Vecchi~\cite{FerroniNasrVecchiGamma}, while the second is their
Proposition~5.20.

\begin{proposition}\label{prop:ZFormulas}
For $n\ge1$,
\begin{equation}\label{eq:ZTFormula}
Z_{T_n}(x)=x(1+x)^n+
\sum_{r=0}^n\binom nr(2x)^{n-r}P_r(x).
\end{equation}
For $n\ge2$, one has $Z_{K_{2,n}}(x)=Z_{T_n}(x)$.
\end{proposition}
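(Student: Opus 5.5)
The plan is to evaluate $Z_M(x)=\sum_F x^{\rk F}P_{M/F}(x)$ directly, by listing the flats of each graph and identifying each contraction up to simplification. Two standard facts are used throughout: Kazhdan--Lusztig polynomials are unchanged by simplification (parallel classes collapse to single elements), and a Boolean matroid has $P=1$. Flats of a graphic matroid correspond to partitions of the vertex set into blocks that induce connected subgraphs. The rank of such a flat is $|V|$ minus the number of blocks. The contraction by it is the graph obtained by merging each block to a single vertex and deleting the resulting loops.

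\emph{The thagomizer $T_n$.} Label the spine vertices $a,b$ and the leaves $v_1,\dots,v_n$. Every vertex is adjacent to $a$ or $b$, so every non-singleton block meets $\{a,b\}$. Either $a,b$ lie in one block, or they lie in different blocks.

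\begin{itemize}
\item \emph{Same block.} The block is $\{a,b\}\cup S$ for an arbitrary set $S$ of leaves with $|S|=s$, and the rank is $s+1$. Contracting gives one vertex joined to each of the $n-s$ remaining leaves by a double edge. Its simplification is the star $K_{1,n-s}$, which is Boolean, so $P=1$. These flats contribute $\sum_s\binom ns x^{s+1}=x(1+x)^n$.
\item \emph{Different blocks.} The blocks are $\{a\}\cup A$ and $\{b\}\cup B$ for disjoint leaf sets $A,B$, and the rank is $|A|+|B|=n-r$, where $r$ is the number of leaves in neither set. There are $\binom nr 2^{n-r}$ choices of $(A,B)$. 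The contraction keeps the edge $ab$ together with both edges to each of the $r$ free leaves, so after simplification it is $T_r$. These flats contribute $\sum_r\binom nr(2x)^{n-r}P_r(x)$.
\end{itemize}

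\noindent Adding the two cases gives \eqref{eq:ZTFormula}. The convention $P_0=1$ is consistent, since $T_0$ is a single edge.

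\emph{The graph $K_{2,n}$.} I would repeat the same enumeration for $n\ge2$. The step needing care is that $a$ and $b$ are now non-adjacent.

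\begin{itemize}
\item \emph{Same block.} Connectivity forces $S\neq\emptyset$. The contraction is again a doubled star, so these flats contribute $x(1+x)^n-x$.
\item \emph{Different blocks, $A\cup B\ne\emptyset$.} Say $v\in A$. Contraction merges $v$ into $a'$, so the edge $vb$ becomes an edge $a'b'$. The simplified contraction is therefore $T_r$, not $K_{2,r}$, and the contribution matches the thagomizer term for each $r<n$.
\item \emph{The empty flat.} Only here is the contraction $K_{2,n}$ itself. By~\cite[Theorem~5.8]{GPY} it contributes $P_{K_{2,n}}(x)=P_n(x)+x$.
\end{itemize}

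\noindent Summing, the $-x$ from the same-block case cancels the $+x$ from the empty flat. Hence $Z_{K_{2,n}}(x)=Z_{T_n}(x)$.

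The main obstacle is this bookkeeping for $K_{2,n}$. One must notice that a nonempty $A\cup B$ creates a spine edge after contraction, and check that the lone discrepancy, the missing flat $\{a,b\}$ with $S=\emptyset$, is exactly offset by the $+x$ in the Gedeon--Proudfoot--Young identity.
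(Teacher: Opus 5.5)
Your proof is correct, but it takes a different route from the paper. The paper gives no argument for this proposition: it takes formula~\eqref{eq:ZTFormula} from the proof of Proposition~5.18 of Ferroni, Nasr, and Vecchi, and the equality $Z_{K_{2,n}}=Z_{T_n}$ from their Proposition~5.20.

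Your argument works directly from the definition $Z_M(x)=\sum_F x^{\rk F}P_{M/F}(x)$. You enumerate flats as partitions into connected blocks and identify each contraction up to simplification: a doubled star, hence Boolean, when $a$ and $b$ share a block, and $T_r$ otherwise. This reproduces both statements, and the bookkeeping checks out; for example, $Z_{T_2}=Z_{K_{2,2}}=1+6x+6x^2+x^3$.

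The citation buys brevity. Your route buys a self-contained proof. Beyond standard facts, its only external input is the identity $P_{K_{2,n}}=P_n+x$ of Gedeon, Proudfoot, and Young, which the paper already uses for Theorem~\ref{thm:KLtwoFamilies}. It also shows why the equality holds. The two flat sums differ only in the rank-one flat $\{ab\}$ of $T_n$ and in the contribution of the empty flat. Hence $Z_{K_{2,n}}-Z_{T_n}=P_{K_{2,n}}-P_n-x$, so the equality of $Z$-polynomials is equivalent to the Gedeon--Proudfoot--Young identity.

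One small correction to your justification. Every block with at least two vertices meets $\{a,b\}$ because the leaves are pairwise non-adjacent, so every edge meets $\{a,b\}$. It does not follow from every vertex being adjacent to $a$ or $b$. The same fact shows that every block avoiding $a$ and $b$ is a single leaf, which your enumeration uses implicitly in both graphs.
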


For $n=0$, the graphic matroid of $T_0$ has rank $1$ and exactly two
flats.  Hence its defining sum gives $Z_{T_0}(x)=1+x$, and
\eqref{eq:ZTFormula} also holds in this case because $P_0(x)=1$.

Proudfoot, Xu, and Young~\cite[Proposition~2.3]{ProudfootXuYoung}
proved that
$Z_{T_n}(x)=x^{n+1}Z_{T_n}(x^{-1})$.  Since $Z_{T_n}(0)=1$, this
polynomial is palindromic of degree $n+1$.  For each $N\ge0$, the
polynomials
$x^k(1+x)^{N-2k}$, $0\le k\le\lfloor N/2\rfloor$, form a basis of
the vector space of polynomials palindromic with respect to $N$.
Indeed, they lie in this space and have pairwise distinct lowest
nonzero degrees, so they are linearly independent.  A palindromic
polynomial is determined by its coefficients of degrees
$0,\ldots,\lfloor N/2\rfloor$, so the space has dimension
$\lfloor N/2\rfloor+1$, equal to the number of these polynomials.
This basis therefore packages the palindromicity of $Z_{T_n}$ into a
single auxiliary polynomial.  In particular, there is a unique
polynomial $\Gamma_n(u)$ of degree at most
$\lfloor(n+1)/2\rfloor$ such that
\begin{equation}\label{eq:GammaDef}
Z_{T_n}(x)=(1+x)^{n+1}
\Gamma_n\!\left(\frac{x}{(1+x)^2}\right).
\end{equation}

Ferroni, Nasr, and Vecchi~\cite[Proposition~5.3]{FerroniNasrVecchiGamma}
showed more generally that a palindromic polynomial with positive
coefficients is real-rooted if and only if its $\gamma$-polynomial has
only negative real zeros.  Formula \eqref{eq:ZTFormula} shows that
$Z_{T_n}(x)$ has strictly positive coefficients: every matroid
Kazhdan--Lusztig polynomial has constant coefficient $1$.  In
\eqref{eq:ZTFormula}, only the summand with $r=n$ contributes to the
constant coefficient, which is therefore $P_n(0)=1$, while
$x(1+x)^n$ has positive coefficients in every degree from $1$ to
$n+1$.  Every summand in the sum has nonnegative coefficients by the
nonnegativity of matroid Kazhdan--Lusztig coefficients.  Thus
Proposition~5.3 of Ferroni, Nasr, and Vecchi applies to $Z_{T_n}$.
We shall establish the stronger fact that all zeros of $\Gamma_n$ are
simple; the final factorization will then transfer this simplicity to
$Z_{T_n}$.

The next generating function will allow the same transformation used
in Section~\ref{sec:KLproof} to be applied uniformly to all
$\Gamma_n$.

\begin{proposition}
The generating function of the polynomials $\Gamma_n$ is
\begin{equation}\label{eq:GammaGF}
\sum_{n\ge0}\Gamma_n(u)z^n
=\frac{1-\sqrt{1-4z+4(1-u)z^2}}{2z(1-z)}.
\end{equation}
\end{proposition}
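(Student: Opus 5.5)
Starting from \eqref{eq:ZTFormula}, we compute $\sum_{n\ge0}Z_{T_n}(x)z^n$ in closed form and then invert the defining relation \eqref{eq:GammaDef}. Two ingredients are needed. First, $\sum_n x(1+x)^nz^n=x/(1-(1+x)z)$. Second, the binomial convolution gives
\[
\sum_{n\ge0}\sum_{r}\binom nr(2x)^{n-r}P_r(x)z^n
=\frac{1}{1-2xz}\,\mathcal P\!\left(x,\frac{z}{1-2xz}\right),
\]
where $\mathcal P(x,t)=\sum_rP_r(x)t^r=C(t-(1-x)t^2)$ is \eqref{eq:ThagGF}. This holds because $\sum_{n\ge r}\binom nr a^{n-r}z^n=z^r/(1-az)^{r+1}$. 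Together with the $n=0$ check already recorded after Proposition~\ref{prop:ZFormulas}, this gives a closed form for $\sum_nZ_{T_n}(x)z^n$.

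Next we pass to $\Gamma_n$. From \eqref{eq:GammaDef}, setting $u=x/(1+x)^2$ gives
\[
\sum_n\Gamma_n(u)z^n=(1+x)^{-1}\sum_nZ_{T_n}(x)\bigl(z/(1+x)\bigr)^n .
\]
So we substitute $z\mapsto z/(1+x)$ in the closed form above, divide by $1+x$, and simplify, expressing everything through $u$ via $x=u(1+x)^2$. The polynomial term becomes
\[
\frac{x}{(1+x)(1-z)}=\frac{u(1+x)}{1-z}.
\]
For the Catalan term, the argument is $t=z/(1+x-2xz)$. The key computation is
\[
1-4\bigl(t-(1-x)t^2\bigr)=\frac{(1+x-2xz)^2-4z(1+x-2xz)+4(1-x)z^2}{(1+x-2xz)^2}.
\]
Expanding the numerator gives $(1+x)^2\bigl(1-4z+4(1-u)z^2\bigr)$ after using $x=u(1+x)^2$; this is the step to verify carefully. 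Writing $C(s)=2/(1+\sqrt{1-4s})$ or $(1-\sqrt{1-4s})/(2s)$, the Catalan term then becomes
\[
\frac{1}{1+x-2xz}\cdot\frac{(1+x-2xz)\bigl(1-(1+x)\sqrt{D}/(1+x-2xz)\bigr)}{2t(\ldots)}\,,
\]
with $D=1-4z+4(1-u)z^2$. We then add the $u(1+x)/(1-z)$ term and check that all explicit occurrences of $x$ cancel, leaving $(1-\sqrt D)/(2z(1-z))$.

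The main obstacle is this final algebraic cancellation. In particular, the denominator $s=t-(1-x)t^2$ must combine with $1-z$. To organize it, we will write $s=z(1+x-2xz)-(1-x)z^2$ over $(1+x-2xz)^2$. We will choose the branch of the square root by the constant-term convention (formal power series in $z$ with constant term $1$), which is compatible with the substitution since $t$ has no constant term. If the direct simplification is messy, the fallback is to verify the claimed identity in the form $2z(1-z)F=1-\sqrt D$. That is, we check that the candidate $F$ satisfies the quadratic
\[
z(1-z)^2F^2-(1-z)F+(1-u)z+\ldots=0
\]
obtained by squaring, and confirm the constant term $\Gamma_0=1$ (since $Z_{T_0}=1+x$) to select the correct root. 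The quadratic itself follows from the Catalan relation $C=1+sC^2$ after clearing the affine shift by the $u(1+x)/(1-z)$ term.
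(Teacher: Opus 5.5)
Your plan is correct and is essentially the paper's own proof: sum \eqref{eq:ZTFormula} via the binomial convolution and \eqref{eq:ThagGF}, rescale $z\mapsto z/(1+x)$, divide by $1+x$, and simplify. The cancellation you flag is immediate once you note that the Catalan argument is $s=z(1-z)(1+x)/A^2$ with $A=1+x-2xz$, so that $\frac{1}{A}C(s)=\frac{A-(1+x)\sqrt D}{2z(1-z)(1+x)}$ and $A+2xz=1+x$ absorbs the term $\frac{x}{(1+x)(1-z)}$.

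Three smaller points. The branch matching rests on $(1+x)/A$ having constant term $1$ in $z$. The identity descends from $\mathbb R(x)[[z]]$ to $\mathbb R[u][[z]]$ because $u\mapsto x/(1+x)^2$ is injective on $\mathbb R[u]$. Your fallback quadratic should read $z(1-z)^2F^2-(1-z)F+1-(1-u)z=0$.
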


\begin{proof}
Here and below, every square root occurring in a generating function
denotes the unique formal power series with constant term $1$.
Summing \eqref{eq:ZTFormula}, writing $n=r+m$, and using
\eqref{eq:ThagGF}, we obtain
\[
\sum_{n\ge0}Z_{T_n}(x)z^n
=\frac{x}{1-(1+x)z}
+\frac{1}{1-2xz}
C\!\left(\frac{z-(1+x)z^2}{(1-2xz)^2}\right).
\]
Put $u=x/(1+x)^2$.  Replace $z$ by $z/(1+x)$, divide by $1+x$, and
use \eqref{eq:GammaDef}.  This gives
\[
\sum_{n\ge0}\Gamma_n(u)z^n
=\frac{x}{(1+x)(1-z)}
+\frac{1}{1+x-2xz}
C\!\left(\frac{z(1-z)(1+x)}{(1+x-2xz)^2}\right).
\]
Writing $A=1+x-2xz$ and using
$C(w)=(1-\sqrt{1-4w})/(2w)$, note that the argument of $C$ is
$w=z(1-z)(1+x)/A^2$.  Since $A/(1+x)$ has constant term $1$ as a
power series in $z$, the normalized formal square roots satisfy
\[
\sqrt{1-4w}
=\frac{1+x}{A}
\sqrt{\frac{A^2-4z(1-z)(1+x)}{(1+x)^2}}.
\]
It follows that the right-hand side becomes
\[
\frac{1-
\sqrt{\bigl(A^2-4z(1-z)(1+x)\bigr)/(1+x)^2}}
{2z(1-z)}.
\]
Since
\[
\frac{(1+x-2xz)^2-4z(1-z)(1+x)}{(1+x)^2}
=1-4z+4(1-u)z^2,
\]
formula \eqref{eq:GammaGF} follows.  The substitution
$u=x/(1+x)^2$ induces an injective homomorphism
$\mathbb R[u]\to\mathbb R(x)$, so the coefficient identity holds in
$\mathbb R[u][[z]]$.
\end{proof}

\subsection{The transformed \texorpdfstring{$\gamma$}{gamma}-polynomials}

Apply the transformation in Lemma~\ref{lem:PhiBasic} to
$\Gamma_n$ and set $D_n(q)=\Phi_{n+1}(\Gamma_n)(q)$ for $n\ge0$.
For $q\ne-1$, formulas \eqref{eq:PhiRational} and
\eqref{eq:GammaGF}, together with
$1-((q-1)/(q+1))^2=4q/(1+q)^2$, give
\begin{equation}\label{eq:DnGF}
\sum_{n\ge0}D_n(q)z^n
=\frac{1-\sqrt{(1-4z)(1-4qz)}}
{2z(1-(1+q)z)}.
\end{equation}
The coefficient of every power of $z$ on both sides is a polynomial
in $q$, so \eqref{eq:DnGF} holds for every $q$.  To apply a
unit-circle theorem, we must now identify the signs of the interior
coefficients of $D_n$.

\begin{lemma}\label{lem:DnCoeffSign}
For every $n\ge0$,
\begin{equation}\label{eq:DnCoeffShape}
D_n(q)=\rho_n(1+q^{n+1})-
\sum_{r=1}^n\delta_{n,r}q^r,
\end{equation}
where $\delta_{n,r}\ge0$ and
$\rho_n=\sum_{j=0}^nC_j$.
\end{lemma}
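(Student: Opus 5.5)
The plan is to reduce the lemma, exactly as for $B_n$ in Section~\ref{sec:KLproof}, to a sign statement about the mixed coefficients of a symmetric bivariate series. Put $X=z$ and $Y=qz$ in \eqref{eq:DnGF}. Since $(1-4z)(1-4qz)=(1-4X)(1-4Y)$, $z=X$ and $(1+q)z=X+Y$, one gets
\[
\sum_{n\ge0}D_n(q)z^{n+1}=\mathcal H(X,Y),\qquad
\mathcal H(X,Y)=\frac{1-\sqrt{(1-4X)(1-4Y)}}{2(1-X-Y)},
\]
and therefore $[q^r]D_n=[X^{n+1-r}Y^r]\mathcal H(X,Y)$. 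The series $\mathcal H$ is symmetric in $X$ and $Y$, which gives the palindromicity $[q^r]D_n=[q^{n+1-r}]D_n$. For the boundary coefficients, set $Y=0$:
\[
\mathcal H(X,0)=\frac{1-\sqrt{1-4X}}{2(1-X)}=\frac{XC(X)}{1-X},
\]
so $[q^0]D_n=[X^{n+1}]\,XC(X)/(1-X)=\sum_{j=0}^nC_j=\rho_n$, and likewise $[q^{n+1}]D_n=\rho_n$. The lemma therefore reduces to the single claim that every mixed coefficient $[X^aY^b]\mathcal H$ with $a,b\ge1$ is $\le0$. This is the analogue of Lemma~\ref{lem:twoVariableCatalan}.

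For the mixed coefficients I would reuse the notation $U=C(X)-1$ and $V=C(Y)-1$ from Lemma~\ref{lem:twoVariableCatalan}. From $\sqrt{1-4X}=(1-U)/(1+U)$ one gets
\[
1-\sqrt{(1-4X)(1-4Y)}=\frac{2(U+V)}{(1+U)(1+V)}.
\]
Eliminating $X=U/(1+U)^2$ and $Y=V/(1+V)^2$ gives
\[
\mathcal H=\frac{(U+V)(1+U)(1+V)}{(1+U+U^2)(1+V+V^2)-UV}.
\]
Equivalently, with $f=XC(X)$ and $g=YC(Y)$,
\[
\mathcal H=\frac{f+g-2fg}{1-X-Y}.
\]
I would then split $\mathcal H$ as in \eqref{eq:CXYdecomp}, writing it as a sum of a part depending only on $X$, a part depending only on $Y$, and a remainder. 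The goal is to show that the remainder has nonnegative mixed coefficients and enters with a minus sign. Concretely, I would isolate the pure parts $f/(1-X)$ and $g/(1-Y)$ and compute the difference
\[
\mathcal H-\frac{f}{1-X}-\frac{g}{1-Y}
\]
over a common denominator. I would then try to recognize this difference, up to sign, as a product of a positive series in $U,V$ and the factor $UV/(1+UV)$, or some similar series whose mixed coefficients were already shown to be positive by the von Szily computation in Lemma~\ref{lem:twoVariableCatalan}.

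If a clean factorization does not appear, the fallback is a direct coefficient computation. Write $s_0=1$ and $s_k=-2C_{k-1}$ for the coefficients of $\sqrt{1-4X}$. Then
\[
[X^aY^b]\mathcal H=\frac12\sum_{i\le a,\,j\le b}\binom{a+b-i-j}{a-i}\bigl(\delta_{i0}\delta_{j0}-s_is_j\bigr).
\]
Here I would split the double sum according to whether $i=0$, $j=0$, or both indices are positive. The terms with both indices positive are all negative. The plan is to show that the single-axis contributions are dominated by them, using the Catalan convolution $\sum_k C_kC_{m-k}=C_{m+1}$ together with a telescoping argument of the same type as the $R_k$ argument in Lemma~\ref{lem:twoVariableCatalan}.

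The main obstacle is this sign claim for the mixed coefficients. The factor $1/(1-X-Y)$ has positive coefficients, so it spreads the positive pure-axis coefficients of $1-\sqrt{(1-4X)(1-4Y)}$ into the mixed region. Naive positivity arguments therefore fail. In particular, an induction on $n$ through the relation
\[
D_n-(1+q)D_{n-1}=(1+q)B_n-4qB_{n-1}
\]
breaks at $r=1$, where $\rho_{n-1}$ enters positively. So the cancellation has to be exhibited globally, and I expect finding the right decomposition in $U,V$ to be the crux.
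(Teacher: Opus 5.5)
\textbf{Verdict: genuine gap.} Your reduction is correct. With $X=z$ and $Y=qz$ one has $[q^r]D_n=[X^{n+1-r}Y^r]\mathcal H$. The symmetry of $\mathcal H$ gives palindromicity, and $\mathcal H(X,0)=XC(X)/(1-X)$ gives the outer coefficients $\rho_n$. Your expressions for $\mathcal H$ in terms of $U,V$ and of $f,g$ also check out. But the whole content of the lemma is the claim that $[X^aY^b]\mathcal H\le0$ for $a,b\ge1$, and you do not prove it. The decomposition is left at ``I would then try to recognize,'' and the fallback stops at ``the plan is to show that the single-axis contributions are dominated''; you yourself flag this as the crux. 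As written, the proposal only establishes palindromicity and the constant and leading coefficients. The shape you anticipate, a multiple of $UV/(1+UV)$ controlled by the von Szily identity, is also not what actually occurs.

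Your first route does close in a few lines. Use $1+U+U^2=(1+U)^2(1-X)$, $(1+U+U^2)(1+V+V^2)-UV=(1+U)^2(1+V)^2(1-X-Y)$ and $U=X(1+U)^2$. Putting the difference over a common denominator then gives
\[
\mathcal H-\frac{XC(X)}{1-X}-\frac{YC(Y)}{1-Y}
=-\frac{XY}{1-X-Y}\left(\frac{XU^2}{1-X}+\frac{YV^2}{1-Y}+2UV\right).
\]
Every factor on the right has nonnegative coefficients, so the mixed coefficients are nonpositive and no von Szily computation is needed.

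The paper instead argues one power of $q$ at a time. Write $h_r(X)=[Y^r]\mathcal H$; this is the paper's $\mathcal D_r$ up to the factor $z^{r-1}$. Extracting $[Y^r]$ from $(1-X-Y)\mathcal H=\frac12\bigl(1-\sqrt{(1-4X)(1-4Y)}\bigr)$ gives $(1-X)h_r=h_{r-1}+C_{r-1}\sqrt{1-4X}$ for $r\ge1$. An induction on $r$ then shows $h_r=\rho_{r-1}-XE_r$ with $E_r\ge0$. You were right that induction on $n$ fails; induction on $r$ works. Its only numerical input is $\rho_k\le 2C_k$, which makes the constant term $2C_{r-1}-\rho_{r-1}$ of the numerator nonnegative before dividing by $1-X$.

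The closed form exhibits the cancellation globally in one identity. The paper's recursion is more elementary but needs that auxiliary Catalan inequality. Either argument fills the gap; without one of them the proposal does not prove the lemma.
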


\begin{proof}
For $r\ge0$, set
$\mathcal D_r(z)=[q^r]\sum_{n\ge0}D_n(q)z^n$.  Setting $q=0$ in
\eqref{eq:DnGF} gives
$\mathcal D_0(z)=C(z)/(1-z)$.  Hence the constant coefficient of
$D_n(q)$ is $\rho_n$.  Since
$\sqrt{1-4qz}=1-2\sum_{r\ge1}C_{r-1}(qz)^r$, multiplying
\eqref{eq:DnGF} by $1-(1+q)z$ and extracting the coefficient of
$q^r$ gives
\begin{equation}\label{eq:DrRec}
\mathcal D_r(z)=\frac{z\mathcal D_{r-1}(z)+C_{r-1}z^{r-1}\sqrt{1-4z}}{1-z}
\qquad (r\ge1).
\end{equation}
We claim that
\begin{equation}\label{eq:DrSignForm}
\mathcal D_r(z)=\rho_{r-1}z^{r-1}-z^rE_r(z),
\qquad E_r(z)\in\mathbb R_{\ge0}[[z]].
\end{equation}
For $r=1$, the identities
$\sqrt{1-4z}=1-2zC(z)$ and $C(z)-1=zC(z)^2$ give
\[
\begin{aligned}
\mathcal D_1(z)
&=\frac{zC(z)+(1-z)\sqrt{1-4z}}{(1-z)^2}\\
&=\frac{(1-z)^2-z^2(C(z)-1)^2}{(1-z)^2}\\
&=1-\frac{z^4C(z)^4}{(1-z)^2}
 =1-zE_1(z),\\
E_1(z)
&=\frac{z^3C(z)^4}{(1-z)^2}
 \in\mathbb R_{\ge0}[[z]].
\end{aligned}
\]
Thus \eqref{eq:DrSignForm} holds for $r=1$.  Suppose that it holds for
$r-1$, where $r\ge2$.  Using
$\rho_{r-2}+C_{r-1}=\rho_{r-1}$, substitution in
\eqref{eq:DrRec} gives
\[
\mathcal D_r(z)=\frac{\rho_{r-1}z^{r-1}
-z^r(E_{r-1}(z)+2C_{r-1}C(z))}{1-z}.
\]
Therefore \eqref{eq:DrSignForm} holds with
\[
E_r(z)=\frac{E_{r-1}(z)+2C_{r-1}C(z)-\rho_{r-1}}{1-z}.
\]
It remains to prove that $E_r(z)$ has nonnegative coefficients.  We have
\[
E_{r-1}(z)+2C_{r-1}C(z)-\rho_{r-1}
=E_{r-1}(z)+2C_{r-1}(C(z)-1)
 +(2C_{r-1}-\rho_{r-1}).
\]
The first two terms on the right have nonnegative coefficients.
Moreover,
$\rho_0\le2C_0$, $\rho_1=2C_1$, and
$C_k=((4k-2)/(k+1))C_{k-1}\ge2C_{k-1}$ for $k\ge2$.  Hence, if
$\rho_{k-1}\le2C_{k-1}$, then
$\rho_k=\rho_{k-1}+C_k\le2C_{k-1}+C_k\le2C_k$.  Thus
$\rho_k\le2C_k$ for every $k\ge0$.  Therefore the numerator in the
definition of $E_r(z)$ has nonnegative
coefficients.  Since $(1-z)^{-1}$ also has nonnegative coefficients,
$E_r(z)$ is coefficientwise nonnegative.

Taking $r=n+1$ in \eqref{eq:DrSignForm} gives
$[q^{n+1}]D_n(q)=[z^n]\mathcal D_{n+1}(z)=\rho_n$.  For
$1\le r\le n$, the same formula gives
\[
[q^r]D_n(q)=[z^n]\mathcal D_r(z)
=-[z^{n-r}]E_r(z)\le0.
\]
Together with the constant coefficient $\rho_n$, this proves
\eqref{eq:DnCoeffShape}.
\end{proof}

Lemma~\ref{lem:DnCoeffSign} shows that $D_n$ has positive constant and
leading coefficients and nonpositive interior coefficients.  The
value at
$q=1$ will make the resulting coefficient inequality strict, placing
$D_n$ within the following unit-circle criterion.

\subsection{Zeros of the \texorpdfstring{$\gamma$}{gamma}- and \texorpdfstring{$Z$}{Z}-polynomials}

A polynomial $f(q)=\sum_{j=0}^m a_jq^j$ is called
\emph{self-inversive} if
$f(q)=\omega q^m\overline{f(1/\overline q)}$ for some
$|\omega|=1$.  Equivalently,
$a_j=\omega\overline{a_{m-j}}$.  Every real palindromic polynomial is
self-inversive with $\omega=1$.

We shall use the following theorem of Lakatos and
Losonczi~\cite[Theorem~1(ii)-1 and (ii)-2]{LakatosLosonczi}.

\begin{theorem}[Lakatos--Losonczi]\label{thm:LL}
Let $m\ge1$, and let
$f(q)=\sum_{j=0}^m a_jq^j$ be self-inversive of degree $m$.  If
\[
|a_m|\ge\frac12\sum_{j=1}^{m-1}|a_j|,
\]
then all zeros of $f$ lie on the unit circle.  If the inequality is
strict, then all zeros are simple.
\end{theorem}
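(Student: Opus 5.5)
The plan is to reduce the statement to a sign-change count for a real trigonometric function on the unit circle. I would prove the strict case first and then obtain the non-strict case by a limiting argument.

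\emph{Step 1 (real form on the circle).} Fix a square root $\omega^{1/2}$. For $q=e^{i\varphi}$ put
\[
g(\varphi)=\omega^{-1/2}e^{-im\varphi/2}f(e^{i\varphi}).
\]
Using $a_j=\omega\overline{a_{m-j}}$, I would group the terms of indices $j$ and $m-j$:
\[
\omega^{-1/2}\bigl(a_je^{i(j-m/2)\varphi}+a_{m-j}e^{i(m/2-j)\varphi}\bigr)
=2\operatorname{Re}\bigl(\omega^{-1/2}a_je^{i(j-m/2)\varphi}\bigr).
\]
Hence $g$ is real-valued. When $m$ is even, the middle term $\omega^{-1/2}a_{m/2}$ is itself real with absolute value $|a_{m/2}|$.

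\emph{Step 2 (main term and perturbation).} Write $\omega^{-1/2}a_m=|a_m|e^{i\beta}$. Then
\[
g(\varphi)=2|a_m|\cos(m\varphi/2+\beta)+h(\varphi).
\]
Each pair $1\le j<m/2$ contributes at most $2|a_j|=|a_j|+|a_{m-j}|$ to $h$, and the middle term contributes at most $|a_{m/2}|$. Therefore
\[
|h(\varphi)|\le\sum_{j=1}^{m-1}|a_j|\le 2|a_m|.
\]
Choose $\varphi_0$ with $m\varphi_0/2+\beta\equiv0\pmod{2\pi}$ and set $\varphi_k=\varphi_0+2\pi k/m$ for $0\le k\le m$. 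At these points the main term equals $2(-1)^k|a_m|$.

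\emph{Step 3 (strict case).} Under the strict inequality, $\sgn g(\varphi_k)=(-1)^k$. The intermediate value theorem then gives a zero of $g$ in each open interval $(\varphi_{k-1},\varphi_k)$, $1\le k\le m$. These intervals are disjoint and lie in the half-open window $[\varphi_0,\varphi_0+2\pi)$, so they give $m$ distinct points $e^{i\varphi}$ on the unit circle. Each is a zero of $f$, because $g$ and $f$ differ by a nonvanishing factor. Since $\deg f=m$, these are all the zeros of $f$, and all are simple.

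\emph{Step 4 (non-strict case).} For $\varepsilon>0$ replace $a_m$ by $(1+\varepsilon)a_m$ and $a_0$ by $(1+\varepsilon)a_0$. This keeps the polynomial self-inversive with the same $\omega$, keeps degree $m$, and makes the inequality strict. By Step 3 all zeros of the perturbed polynomial lie on $|q|=1$. As $\varepsilon\to0$ the zeros depend continuously on the coefficients, by Hurwitz's theorem or the continuity of roots for a fixed degree with nonvanishing leading coefficient. The unit circle is closed, so all zeros of $f$ lie on it.

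\emph{Main obstacle.} The delicate point is the bookkeeping with half-integer exponents when $m$ is odd. There $g$ is only $2\pi$-antiperiodic up to sign rather than periodic. I would deal with this by working solely on the window $[\varphi_0,\varphi_0+2\pi)$, which the map $\varphi\mapsto e^{i\varphi}$ sends injectively onto the circle. The window contains exactly $m$ of the intervals $(\varphi_{k-1},\varphi_k)$, so the count is unaffected.
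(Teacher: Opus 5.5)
The paper does not prove this statement. It quotes it from Lakatos and Losonczi and uses only the strict case, applied to $D_n$, where $D_n(1)=2^{n+1}$ gives $\rho_n>\frac12\sum_{r}\delta_{n,r}$.

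Your argument is correct and self-contained, so it fills that gap:
\begin{itemize}
\item Pairing the indices $j$ and $m-j$ makes $g$ real.
\item The extreme pair contributes $2|a_m|\cos(m\varphi/2+\beta)$, and this dominates $h$.
\item The signs $(-1)^k$ at the $m+1$ equally spaced nodes $\varphi_k$ give one zero in each of $m$ disjoint arcs.
\item The degree count then gives simplicity.
\item Scaling $a_0$ and $a_m$ by $1+\varepsilon$ keeps the polynomial self-inversive with the same $\omega$, so Hurwitz reduces the equality case to the strict one.
\end{itemize}

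Your route also gives extra information: the zeros are separated by the points $e^{i\varphi_k}$. More usefully, it shows that the cited theorem runs on the same mechanism the paper already uses in Steps~2--3 of the proof of Theorem~\ref{thm:KLpencil}. That mechanism is alternating signs at equally spaced nodes, the intermediate value theorem, and a degree count.

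Consequently, the citation could be dropped in the application. For $D_n$ one has $\omega=1$, $\beta=0$ and $\varphi_0=0$, so $g(2\pi k/(n+1))=(-1)^kD_n(\zeta^k)$ with $\zeta=e^{2\pi i/(n+1)}$. The computation behind \eqref{eq:BnRootUnityBound}, combined with the nonpositive interior coefficients from Lemma~\ref{lem:DnCoeffSign}, gives $D_n(\zeta^k)\ge D_n(1)=2^{n+1}>0$. This node estimate is sharper than your triangle-inequality bound, and your Step~4 is not needed there.

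The citation's advantage is only brevity: it states the result for general complex self-inversive polynomials, and your proof covers that generality as well.
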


\begin{theorem}\label{thm:GammaRealRooted}
For every $n\ge1$, the polynomial $\Gamma_n(u)$ has degree
$\lfloor(n+1)/2\rfloor$ and has only simple negative zeros.
\end{theorem}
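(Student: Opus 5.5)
We apply Theorem~\ref{thm:LL} to $D_n=\Phi_{n+1}(\Gamma_n)$ and then pull its unit-circle zeros back to the negative real axis through \eqref{eq:PhiUnitCircle}.

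\emph{Step 1: the Lakatos--Losonczi inequality.} By Lemma~\ref{lem:PhiBasic}, $D_n$ is real and palindromic with respect to $n+1$. By Lemma~\ref{lem:DnCoeffSign}, its constant and leading coefficients equal $\rho_n>0$, so $\deg D_n=n+1$ and $D_n$ is self-inversive of degree $m=n+1$. Its interior coefficients are $-\delta_{n,r}\le0$. To get the strict inequality in Theorem~\ref{thm:LL} we need $\rho_n>\tfrac12\sum_{r=1}^n\delta_{n,r}$. Since $D_n(1)=2\rho_n-\sum_r\delta_{n,r}$, this is equivalent to $D_n(1)>0$. Setting $q=1$ in \eqref{eq:DnGF} gives
\[
\frac{1-(1-4z)}{2z(1-2z)}=\frac{2}{1-2z},
\]
so $D_n(1)=2^{n+1}>0$. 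Theorem~\ref{thm:LL} then shows that all $n+1$ zeros of $D_n$ lie on the unit circle and are simple.

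\emph{Step 2: counting the zeros and pulling them back.} Because $D_n(1)\ne0$, the zeros avoid $q=1$. They are closed under conjugation, and the only possible real zero is $q=-1$. Since $n+1$ is the number of zeros, $q=-1$ must be a zero when $n+1$ is odd. Hence $D_n$ has exactly $\lfloor(n+1)/2\rfloor$ zeros of the form $e^{2i\theta}$ with $0<\theta<\pi/2$. For such $\theta$ we have $\cos\theta\ne0$, so by \eqref{eq:PhiUnitCircle} each of these zeros gives a zero $u=-\tan^2\theta<0$ of $\Gamma_n$. The map $\theta\mapsto-\tan^2\theta$ is injective on $(0,\pi/2)$, so these $\lfloor(n+1)/2\rfloor$ negative zeros of $\Gamma_n$ are distinct. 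Since $\deg\Gamma_n\le\lfloor(n+1)/2\rfloor$ by construction, $\Gamma_n$ has exactly this degree and these are all of its zeros. In particular, all zeros of $\Gamma_n$ are simple.

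The main obstacle is this final step. We must make sure the unit-circle zeros of $D_n$ are not lost at $q=-1$, where the rational form \eqref{eq:PhiRational} degenerates and $\Phi_{n+1}$ could drop degree. The counting argument handles this: the pairs $\{q,\bar q\}$ with $q\ne\pm1$ already number $\lfloor(n+1)/2\rfloor$, which forces the degree of $\Gamma_n$. As a consistency check, we would also verify directly that when $n+1$ is odd, $D_n(-1)=0$. This should follow because only the summand of top index $m=(n+1)/2$ in \eqref{eq:PhiDef} survives at $q=-1$, since every other summand carries a positive power of $1+q$. The count above shows that this top coefficient must vanish when $n$ is even, which is consistent with $\deg\Gamma_n=\lfloor(n+1)/2\rfloor=n/2$ in that case.
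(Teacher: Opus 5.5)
Your proof is correct. Step~1 is the paper's argument; the paper obtains $D_n(1)=2^{n+1}$ from $\Gamma_n(0)=1$ via \eqref{eq:PhiRational} rather than by setting $q=1$ in \eqref{eq:DnGF}, which is immaterial. Step~2, however, runs in the opposite direction from the paper's.

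The paper first pins down $e=\deg\Gamma_n$. It writes $D_n=(1+q)^{n+1-2e}H_n$ and shows $[q^{n+1}]D_n=\Gamma_n(1)=\rho_n$, so that $\deg H_n=2e$ and $H_n(-1)\neq0$. Hence the multiplicity of $-1$ as a zero of $D_n$ is exactly $n+1-2e$, at most $1$ by simplicity, and parity forces $e=\lfloor(n+1)/2\rfloor$. It then pulls each zero $u$ of $\Gamma_n$ back through $\phi(q)=((q-1)/(q+1))^2$ to two zeros of $H_n$ of the same multiplicity. These lie on the unit circle away from $\pm1$, which gives $u=-\tan^2\theta<0$ and simplicity.

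You instead push zeros forward. You pair the $n+1$ simple unit-circle zeros of $D_n$ under conjugation, with $q=1$ excluded and the multiplicity of $q=-1$ (at most one) fixed by parity. This produces exactly $\lfloor(n+1)/2\rfloor$ zeros $e^{2i\theta}$ with $0<\theta<\pi/2$. These map injectively to negative zeros of $\Gamma_n$ by \eqref{eq:PhiUnitCircle}. The a priori bound $\deg\Gamma_n\le\lfloor(n+1)/2\rfloor$, together with $\Gamma_n\neq0$, then yields the degree, negativity and simplicity at once.

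Your route is shorter: it needs neither $H_n$, nor the identity $\Gamma_n(1)=\rho_n$, nor the local analysis of $\phi$ at its preimages. The paper's route gives a multiplicity-preserving correspondence between zeros of $\Gamma_n$ and of $H_n$ that does not rely on knowing the count in advance.

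One correction to your closing ``consistency check'': its conclusion $D_n(-1)=0$ for odd $n+1$ is right, but the stated reason is not. When $n+1$ is odd there is no summand of index $(n+1)/2$ in \eqref{eq:PhiDef}. Every summand carries a positive power of $1+q$, so $D_n(-1)=0$ trivially. A summand surviving at $q=-1$ (the one with $m=(n+1)/2$ and factor $(q-1)^{n+1}$) exists only when $n+1$ is even. In that case your count shows $D_n(-1)\neq0$, i.e.\ the coefficient of $u^{(n+1)/2}$ in $\Gamma_n$ is nonzero rather than zero. Step~2 does not use this paragraph, so it does no harm, but you should fix or drop it.
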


\begin{proof}
Setting $u=0$ in \eqref{eq:GammaGF} gives
$\sum_{n\ge0}\Gamma_n(0)z^n=1/(1-z)$, so $\Gamma_n(0)=1$ and hence
$D_n(1)=2^{n+1}\Gamma_n(0)=2^{n+1}$.  By
Lemma~\ref{lem:DnCoeffSign}, $D_n$ has degree $n+1$, with constant and
leading coefficients both equal to $\rho_n$.
Evaluating \eqref{eq:DnCoeffShape} at $q=1$ gives
$2\rho_n-\sum_{r=1}^n\delta_{n,r}=2^{n+1}>0$, and hence
$\rho_n>\frac12\sum_{r=1}^n\delta_{n,r}$.  The coefficients of
$\Gamma_n$ are real, and hence so are those of
$D_n$.  Lemma~\ref{lem:PhiBasic} shows that $D_n$ is palindromic.
In Theorem~\ref{thm:LL}, take $m=n+1$,
$a_0=a_{n+1}=\rho_n$, and $a_r=-\delta_{n,r}$ for
$1\le r\le n$.  The strict coefficient inequality above is exactly
the hypothesis of that theorem.  Therefore all zeros of $D_n$ are
simple and lie on the unit circle.  We now determine the degree of
$\Gamma_n$ and transfer these zeros through the defining rational
map.

Put $\phi(q)=((q-1)/(q+1))^2$, and let
$e=\deg\Gamma_n$.  Write
$\Gamma_n(u)=\sum_{k=0}^e\gamma_{n,k}u^k$.  Then
\[
D_n(q)=(1+q)^{n+1-2e}H_n(q),
\qquad
H_n(q)=\sum_{k=0}^e\gamma_{n,k}(q-1)^{2k}(q+1)^{2e-2k}.
\]
For $q\ne-1$, we have
$H_n(q)=(q+1)^{2e}\Gamma_n(\phi(q))$, while
$[q^{2e}]H_n(q)=\sum_{k=0}^e\gamma_{n,k}=\Gamma_n(1)$.  Since
$D_n(q)=(1+q)^{n+1-2e}H_n(q)$ and $\deg H_n\le2e$, the term
$q^{n+1}$ can arise only by multiplying the leading term
$q^{n+1-2e}$ of $(1+q)^{n+1-2e}$ by the $q^{2e}$-term of $H_n$.
Therefore
$[q^{n+1}]D_n(q)=[q^{2e}]H_n(q)=\Gamma_n(1)$.  On the other hand,
$[q^{n+1}]D_n(q)=\rho_n$, so $\Gamma_n(1)=\rho_n>0$ and consequently
$\deg H_n=2e$.
Furthermore,
$H_n(-1)=2^{2e}\gamma_{n,e}\ne0$.  Hence $q=-1$ has multiplicity
$n+1-2e$ in $D_n$.  Since all zeros of $D_n$ are simple,
$n+1-2e\le1$.  This nonnegative integer has the same parity as
$n+1$.  Together with $0\le e\le\lfloor(n+1)/2\rfloor$, this forces
$e=\lfloor(n+1)/2\rfloor$.

Neither $0$ nor $1$ is a zero of $\Gamma_n$, since
$\Gamma_n(0)=1$ and $\Gamma_n(1)=\rho_n$.  If $u$ is a zero of
$\Gamma_n$, then $\phi(q)=u$ is equivalent to
$(1-u)q^2-2(1+u)q+(1-u)=0$.
Here the leading coefficient $1-u$ is nonzero and the discriminant is
$16u\ne0$, so the equation has two distinct finite roots, whose product
is $1$.  Moreover, neither root is $\pm1$, and
$\phi'(q)=4(q-1)/(q+1)^3\ne0$ at either root.  Thus a zero of
$\Gamma_n$ of multiplicity $m$ gives two zeros of $H_n$, each of
multiplicity $m$.  Distinct zeros of $\Gamma_n$ have disjoint sets of
preimages under $\phi$.  Since the multiplicities of the zeros of
$\Gamma_n$ sum to $e$ and $\deg H_n=2e$, these account for all zeros
of $H_n$.

Since
$D_n(q)=(1+q)^{n+1-2e}H_n(q)$, every zero of $H_n$ is a zero of
$D_n$.  Moreover, $H_n(-1)\ne0$, so the zeros of $H_n$ are simple
and lie on the unit circle.  Let $u$ be any zero of $\Gamma_n$, and
let $q$ be one of its two preimages under $\phi$.  Then $q$ is a zero
of $H_n$ and $q\ne\pm1$.  Choose
$\theta\in(-\pi/2,\pi/2)$ such that $q=e^{2i\theta}$.  Since
$\theta\ne0$, we have $u=\phi(q)=-\tan^2\theta<0$.  Therefore all
zeros of $\Gamma_n$ are negative and real.  They are simple because a
multiple zero of $\Gamma_n$ would give a multiple zero of $H_n$.
\end{proof}

It remains to translate the zero structure of $\Gamma_n$ back through
the $\gamma$-expansion \eqref{eq:GammaDef}.  This last step reduces to
the roots of explicit reciprocal quadratic factors.

\begin{proof}[Proof of Theorem~\ref{thm:ZRealRooted}]
Let $n\ge2$.  By Proposition~\ref{prop:ZFormulas}, it suffices to
consider $Z_{T_n}(x)$.  By Theorem~\ref{thm:GammaRealRooted}, if
$e=\deg\Gamma_n$, then
$\Gamma_n(u)=c\prod_{j=1}^e(u+a_j)$, where
$a_1,\ldots,a_e$ are positive and distinct.  Since
$1=\Gamma_n(0)=c\prod_{j=1}^e a_j$, we have $c>0$.  Formula
\eqref{eq:GammaDef} gives
\[
Z_{T_n}(x)=c(1+x)^{n+1-2e}
\prod_{j=1}^e\bigl(a_jx^2+(2a_j+1)x+a_j\bigr).
\]
The discriminant of the $j$th quadratic factor is $4a_j+1>0$.
The product of its roots is $1$, and their sum is negative; hence the
two roots are distinct and negative.  Quadratic factors corresponding
to distinct values of $a_j$ have no common root, since
$a=-x/(1+x)^2$ is determined by $x\ne-1$.
Also, $x=-1$ is not a zero of any quadratic factor.  Since
$e=\lfloor(n+1)/2\rfloor$, the factor $1+x$ occurs with exponent $1$
when $n$ is even and with exponent $0$ when $n$ is odd.  Thus, in the
even case it contributes the additional simple root $-1$, while in
the odd case there is no such factor.  In either case, all $n+1$
zeros of $Z_{T_n}(x)$ are distinct and negative.
\end{proof}

\section*{Acknowledgements}
The author thanks Matthew H. Y. Xie and Arthur L. B. Yang for helpful
discussions.  This work was supported by the National Natural Science
Foundation of China (No.~12171362) and the Tianjin Municipal Natural
Science Foundation (No.~25JCYBJC00430).


\begin{thebibliography}{99}

\bibitem{BradenHuhMatherneProudfootWang}
T. Braden, J. Huh, J. P. Matherne, N. Proudfoot, and B. Wang,
\newblock Singular Hodge theory for combinatorial geometries,
\newblock \emph{J. Amer. Math. Soc.}, to appear.

\bibitem{BradenProudfootICM}
T. Braden and N. Proudfoot,
\newblock Intersection cohomology without spaces,
\newblock to appear in the \emph{Proceedings of the International Congress of Mathematicians 2026}.

\bibitem{ChengLiuNonunimodal}
R. Cheng and S. Liu,
\newblock Kazhdan--Lusztig polynomials of matroids need not be unimodal,
\newblock arXiv:2607.24186v2, 2026.

\bibitem{EliasProudfootWakefield}
B. Elias, N. Proudfoot, and M. Wakefield,
\newblock The Kazhdan--Lusztig polynomial of a matroid,
\newblock \emph{Adv. Math.} \textbf{299} (2016), 36--70.

\bibitem{FerroniNasrVecchiGamma}
L. Ferroni, G. D. Nasr, and L. Vecchi,
\newblock Stressed hyperplanes and Kazhdan--Lusztig $\gamma$-positivity for matroids,
\newblock \emph{Int. Math. Res. Not. IMRN} (2023), no.~24, 20883--20942.

\bibitem{GaoLuXieYangZhangUniform}
A. L. L. Gao, L. Lu, M. H. Y. Xie, A. L. B. Yang, and P. B. Zhang,
\newblock The Kazhdan--Lusztig polynomials of uniform matroids,
\newblock \emph{Adv. in Appl. Math.} \textbf{122} (2021), Paper No.~102117.

\bibitem{Gedeon}
K. R. Gedeon,
\newblock Kazhdan--Lusztig polynomials of thagomizer matroids,
\newblock \emph{Electron. J. Combin.} \textbf{24} (2017), no.~3, Paper No.~P3.12.

\bibitem{GPY}
K. Gedeon, N. Proudfoot, and B. Young,
\newblock Kazhdan--Lusztig polynomials of matroids: a survey of results and conjectures,
\newblock \emph{S\'em. Lothar. Combin.} \textbf{78B} (2017), Art.~80.

\bibitem{GesselSuperBallot}
I. M. Gessel,
\newblock Super ballot numbers,
\newblock \emph{J. Symbolic Comput.} \textbf{14} (1992), no.~2--3, 179--194.

\bibitem{LakatosLosonczi}
P. Lakatos and L. Losonczi,
\newblock Self-inversive polynomials whose zeros are on the unit circle,
\newblock \emph{Publ. Math. Debrecen} \textbf{65} (2004), no.~3--4, 409--420.

\bibitem{ProudfootXuYoung}
N. Proudfoot, Y. Xu, and B. Young,
\newblock The $Z$-polynomial of a matroid,
\newblock \emph{Electron. J. Combin.} \textbf{25} (2018), no.~1, Paper No.~P1.26.

\bibitem{Vella}
A. Vella,
\newblock Pattern avoidance in permutations: linear and cyclic orders,
\newblock \emph{Electron. J. Combin.} \textbf{9} (2002/03), no.~2,
Research Paper~R18.

\bibitem{WuZhangThagomizerLogConcavity}
S. Wu and P. B. Zhang,
\newblock The log-concavity of Kazhdan--Lusztig polynomials of thagomizer matroids,
\newblock \emph{Discrete Math.} \textbf{346} (2023), no.~7, Paper No.~113381.

\end{thebibliography}
\end{document}